\newtheorem{thm}{Theorem}[section]
\newtheorem{lem}[thm]{Lemma}
\newtheorem{prob}[thm]{Problem}
\theoremstyle{definition}
\theoremstyle{theorem}
\newtheorem{rem}[thm]{Remark}
\let\c@equation\c@thm
\numberwithin{equation}{section}
\title[Blow-up of solutions of NLS]{Blow-up of solutions of nonlinear Schrödinger equations with oscillating nonlinearities}
\author{Türker Özsarı}
\email{turkerozsari@iyte.edu.tr}
\address{Department of Mathematics, Izmir Institute of Technology, Urla, Izmir, 35430 TURKEY}
\date{}
\begin{document}

\begin{abstract}
The finite time blow-up of solutions for 1-D NLS with oscillating nonlinearities is shown in two domains: (1) the whole real line where the nonlinear source is acting in the interior of the domain and (2) the right half-line where the nonlinear source is placed at the boundary point.  The distinctive feature of this work is that the initial energy is allowed to be non-negative and the momentum is allowed to be infinite in contrast to the previous literature on the blow-up of solutions with time dependent nonlinearities.   The common finite momentum assumption is removed by using a compactly supported or rapidly decaying weight function in virial identities - an idea borrowed from \cite{OT}.  At the end of the paper, a numerical example satisfying the theory is provided.
\end{abstract}

\keywords{blow-up; nonlinear Schrödinger equations; oscillating nonlinearities; infinite momentum; \and nonlinear boundary conditions}
\subjclass[2010]{35B44, 35A01, 35Q55}
\maketitle


\section{Introduction and Main Results}

In this article we study two problems.  The first problem concerns the nonlinear Schrödinger equation posed on $\mathbb{R}$ with a critical focusing type source term whose coefficient is oscillating.

\begin{equation}\label{Model1a}
  i\frac{\partial u}{\partial t} = -\frac{\partial^2u}{\partial x^2}-A_\Omega(t)|u|^4u, t>t_0, x\in\mathbb{R},
\end{equation}
\begin{equation}\label{Model1b}
  u(t_0,x)=u_0(x), x\in\mathbb{R},
\end{equation}
where $t_0\in \mathbb{R}$, $A_\Omega(t)= a(2\Omega t)$ for some real valued periodic smooth function $a(t)$, and $\Omega>0$ is a fixed constant.  $a(t)=\cos^2(\frac{t}{2})$ is a common example.

The second problem examines the Schrödinger equation on the right half-line with a critical or supercritical nonlinear focusing type source term, which has a time dependent coefficient, acting at the only boundary point.

\begin{equation}\label{Model2a}
  i\frac{\partial u}{\partial t} = -\frac{\partial^2u}{\partial x^2}, t>t_0, x>0,
\end{equation}
\begin{equation}\label{Model2b}
  \lim_{x\rightarrow 0^+}u_x(t,x)=-A_\Omega(t)|u(t,0)|^ru(t,0), t>t_0,
\end{equation}
\begin{equation}\label{Model2c}
  u(t_0,x)=u_0(x), x>0,
\end{equation} where $t_0$, $\Omega$, $A_\Omega$, and $a$ are as in \eqref{Model1a}-\eqref{Model1b}, and $r\ge 2$.

Our aim for both problems is to point to the fact that for appropriate initial data $u_0$, it is always possible to obtain solutions which blow-up at the energy level in finite time, even if the initial momentum is infinite and energy is non-negative.

\subsection{Physical motivation}
The nonlinear Schrödinger equation (NLS) is a classical field equation.  It became popular when its one-dimensional version was shown to be integrable in \cite{ZS}.  Although  it has many applications in physics, evolution of a quantum state is not described by NLS, which contrasts with the use of linear Schrödinger equation.   Applications of NLS include transmission of light in nonlinear optical fibers and planar wavequides, small-amplitude gravity waves on the surface of deep inviscid water, and Langmuir waves in hot plasmas \cite{Sulem}, \cite{MB}.  NLS also appears as a universal equation governing the evolution of slowly varying packets of quasi-monochromatic waves in weakly nonlinear dispersive media \cite{Sulem}, \cite{MB}.  Some other interesting applications of NLS include Bose-Einstein condensates \cite{PS}, Davydov's alpha-helix solitons \cite{BR}, and plane-diffracted wave beams in the focusing regions of the ionosphere \cite{AVG}.

\subsubsection*{Whole line model} The NLS with fifth order power type nonlinearity, in particular the one dimensional model given in \eqref{Model1a}-\eqref{Model1b} is referred to as the quintic Schrödinger equation. The quintic NLS (with say $A_\Omega$ being a constant) \begin{equation}\label{Quintic}
              iu_t+u_{xx}-A_\Omega |u|^4u=0
            \end{equation}  is an important model from both mathematical and physical perspectives although it has never been as popular as the NLS with cubic nonlinearity.  One can show that the quintic NLS \eqref{Quintic} is equivalent to a Hamiltonian structure (see e.g., \cite{Sulem}) given by $$\displaystyle\frac{\partial u}{\partial t}=i\frac{\delta H[u]}{\delta \bar{u}},$$ where the Hamiltonian is defined by $$H[u]=\|u_x\|_{L^2}^2+\frac{A_\Omega}{3}\int_{R^n} |u|^6dx.$$  This type of structure naturally arises when $u$ represents a slow envelope of a small amplitude wave in a region which carries the properties of weak dispersion and weak nonlinearity depending on the intensity of the wave \cite{alf2007}.

The quintic Schrödinger equation was recommended as a model for some concrete physical applications.  For instance, \cite{Kol2000} recommended this model for describing the evolution of one-dimensional gas of impenetrable bosons.  Reliability of this model was then verified both theoretically \cite{Lieb} and experimentally \cite{parades} under the assumption that the mass of gas (number of particles) is sufficiently large.  Another physical example, where the quintic NLS plays a role is Bose-Einstein condensates where the three-body interatomic interactions (quintic nonlinearity) heavily dominates the two-body interactions (cubic nonlinearity) so that the latter becomes negligible \cite{abdul}.

If  $A_\Omega$ is constant and positive, then the quintic NLS is said to be defocusing (repulsive).  On the other hand, if $A_\Omega$ is constant and negative, then the quintic NLS is said to be focusing (attractive).  It is well-known that collapse of solutions is possible in the focusing case \cite{Glassey}.  One idea to manage the nonlinear effect in the focusing case is to insert certain physical changes to the system where a rapidly varying nonlinear effect with approximately zero mean can be created.  Then, one can hope to cancel out the adverse effects of the nonlinearity and convey the waves in a more stable form over an extended period of time.  This motivates us to study the mathematical properties of the model presented in \eqref{Model1a}-\eqref{Model1b} where $A_\Omega$ is no longer a constant and desirably a fast oscillating function of time.

\subsubsection*{Half-line model}  In physics, the half-line model posed in \eqref{Model2a}-\eqref{Model2c} with nonlinear Robin-type boundary condition (with say $A_\Omega\equiv 1$) is obtained from the one dimensional nonlinear Schrödinger equation with attractive point nonlinearity given by
\begin{equation}\label{diracnls}
  iu_t+u_{xx}+\delta |u|^ru=0,
\end{equation} where $\delta$ is the usual Dirac delta function, see for instance \cite{Holmer2015} for the relation between two models.  The power type nonlinearity on the boundary is interpreted as a jump condition at the origin given by $$\displaystyle\lim_{x\rightarrow 0^+}u_x(x,t)-\lim_{x\rightarrow 0^-}u_x(x,t)\equiv |u(0,t)|^ru(0,t)$$ for the free Schrödinger equation, which can be rewritten as \eqref{diracnls}.   This condition reduces to $$\displaystyle \lim_{x\rightarrow 0^+}u_x(x,t)\equiv |u(0,t)|^ru(0,t)$$ assuming the evolution is taking place only at the right half-space, i.e., $u(x,t)\equiv 0$ for $x<0$.

\eqref{diracnls} is used (say with $r=2$) to describe the evolution of the propagation of an electron subject to a vibrational impurity at $x=0,$ where the vibration can couple strongly and is completely enslaved to the electron \cite{Molina}.  Another use of this model is to describe the evolution of a wave travelling in a domain that contains a narrow strip of nonlinear (general Kerr-type) material, where the nonlinear strip is assumed to be much smaller than the typical wavelength \cite{yeh}.

In has been shown that collapse of solutions for \eqref{diracnls} or \eqref{Model2a}-\eqref{Model2c} (with $A_\Omega$ being a constant) is possible (\cite{Holmer2015}, \cite{AASKD}, \cite{VKTO}).  This again leads us to consider the same type of problem considered for the whole line problem.  That is, the study of the relationship between the collapse phenomena and the nonlinear oscillating effects.

\subsection{A few words on the previous results}
Ogawa-Tsutsumi \cite{OT} proved the following blow-up result for \eqref{Model1a}-\eqref{Model1b} with $A_\Omega$ being only a constant.

\begin{thm}[\cite{OT}]\label{OTThm}If $u_0\in H^1$ with $E(u_0)<0$, then there is $T>0$ such that $$\displaystyle\lim_{t\rightarrow T^-}\|u_x(t)\|_{L^2}=\infty.$$\end{thm}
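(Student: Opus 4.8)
The plan is to combine the two conservation laws of \eqref{Model1a}--\eqref{Model1b} with a (localized) virial identity in the spirit of Glassey, the only genuine work being to accommodate data of infinite variance. First I would invoke the local well-posedness theory in $H^1$ together with the blow-up alternative: the solution $u$ exists on a maximal interval $[t_0,T_{\max})$, and if $T_{\max}<\infty$ then $\|u_x(t)\|_{L^2}\to\infty$ as $t\to T_{\max}^-$, since the mass $\|u(t)\|_{L^2}=\|u_0\|_{L^2}$ is conserved. Thus it suffices to prove $T_{\max}<\infty$. Along the flow I would also use conservation of the energy $E(u(t))=E(u_0)$, where for the focusing problem $E(u)=\|u_x\|_{L^2}^2-\frac{A_\Omega}{3}\int_{\mathbb{R}}|u|^6\,dx$ with $A_\Omega>0$ constant.

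The heart of the argument is a convexity obstruction. If $u_0$ had finite variance, set $V(t)=\int_{\mathbb{R}}x^2|u(t,x)|^2\,dx$. Differentiating twice in $t$, integrating by parts, and using the equation yields the virial identity $V''(t)=8\|u_x(t)\|_{L^2}^2-\frac{8A_\Omega}{3}\int|u|^6\,dx=8E(u_0)$, where the \emph{exact} cancellation into the energy reflects the $L^2$-criticality of the quintic power in one dimension. Since $E(u_0)<0$, the nonnegative function $V$ is then a downward parabola, so $V(t)<0$ for $t$ large, a contradiction; hence $T_{\max}<\infty$. The difficulty is that $u_0\in H^1$ need not satisfy $xu_0\in L^2$, so $V(t)$ may be infinite and $V'(t_0)$ (essentially the momentum) need not make sense.

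Following \cite{OT}, I would replace $x^2$ by a truncated weight $a_R(x)=R^2\theta(x/R)$, where $\theta\in C^\infty$ satisfies $\theta(y)=y^2$ for $|y|\le 1$, $\theta''\le 2$ everywhere, and $\theta,\theta',\theta^{(4)}$ are bounded with $\theta$ at most linear at infinity; then $V_R(t)=\int a_R|u|^2\,dx$ and $V_R'(t_0)$ are finite for any $H^1$ datum. The localized identity reads $V_R''(t)=8E(u_0)-4\int(2-a_R'')|u_x|^2\,dx+\frac{4}{3}A_\Omega\int(2-a_R'')|u|^6\,dx-\int a_R^{(4)}|u|^2\,dx$. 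Here the second term is $\le 0$ and harmless, and the last is $O\!\left(R^{-2}\|u_0\|_{L^2}^2\right)$ by mass conservation. The main obstacle is the sextic error, which is supported in $|x|\ge R$: I would bound it through a localized Gagliardo--Nirenberg inequality of the form $\int_{|x|\ge R}|u|^6\,dx\le C\big(\int_{|x|\ge R}|u|^2\,dx\big)^2\big(\int_{|x|\ge R}|u_x|^2\,dx+R^{-2}\int_{|x|\ge R}|u|^2\,dx\big)$. Because the gradient enters to the exact power $2$ (again the critical scaling), this term can be absorbed into the good negative term $-4\int(2-a_R'')|u_x|^2$ up to an $O(R^{-2})$ remainder, \emph{provided} the exterior mass $\int_{|x|\ge R}|u(t)|^2\,dx$ is small. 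Controlling this exterior mass uniformly in time is the crux: it is small at $t=t_0$ for $R$ large since $u_0\in L^2$, and a continuity/bootstrap argument (differentiating the exterior mass, whose time derivative is $O(R^{-1}\|u_0\|_{L^2}\|u_x(t)\|_{L^2})$) keeps it small on any interval where $\|u_x\|$ stays bounded. With $R$ fixed large enough one then obtains $V_R''(t)\le 4E(u_0)<0$, so the concavity argument applies to $V_R\ge 0$ and forces $T_{\max}<\infty$, whence $\|u_x(t)\|_{L^2}\to\infty$. I expect this exterior-mass control to be the most delicate point, since in the $L^2$-critical regime the Gagliardo--Nirenberg absorption is borderline and leaves no room to spare.
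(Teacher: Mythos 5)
Your architecture is recognizably the Ogawa--Tsutsumi scheme that this paper itself runs in Section 2: a truncated virial weight (the paper's $\varphi$ in \eqref{varphidef} and $\Psi=\int_0^x\varphi\,dy$ are, up to normalization, your $a_R'/2$ and $a_R$ with $R=1$), absorption of the exterior sextic term into the localized gradient term under a smallness condition on the exterior mass (Lemma \ref{interiorest}, with threshold $\|u(t)\|_{L^2(|x|\ge1)}^4<3/(8A_\Omega)$), and a concavity/quadratic-polynomial contradiction. But there is a genuine gap at exactly the step you flag as the crux: the uniform-in-time control of the exterior mass. You propose to differentiate $\int_{|x|\ge R}|u|^2\,dx$, getting a derivative of size $O(R^{-1}\|u_0\|_{L^2}\|u_x(t)\|_{L^2})$, and to bootstrap ``on any interval where $\|u_x\|_{L^2}$ stays bounded.'' In the contradiction argument there is no a priori bound on $\|u_x(t)\|_{L^2}$: assuming only that the solution persists up to the concavity time $T^*$, the quantity $M=\sup_{[t_0,T^*]}\|u_x\|_{L^2}$ is finite but unquantified, and your scheme needs $R^{-1}MT^*$ small. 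Since $T^*$ grows with $R$ (through $V_R(t_0)$ and $V_R'(t_0)$) and $M$ depends on the interval, the choice of $R$ depends on $M$, which depends on $T^*$, which depends on $R$ --- the loop does not close, and, as you yourself note, the $L^2$-critical absorption leaves no slack with which to break it.

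The correct closure, used in \cite{OT} and reproduced here as Lemma \ref{contra}, controls the exterior mass by the \emph{monotone virial quantity itself}, with no reference to $\|u_x\|$: since $\Psi\ge 1/2$ on $|x|\ge1$, one has $\|u(t)\|_{L^2(|x|\ge1)}^2\le 2\int\Psi|u|^2\,dx\le 2(\alpha+\lambda\tau+\beta\tau^2)$, and a continuity argument on $T'=\sup\{\tau:\|u(t_0+\tau)\|_{L^2(|x|\ge1)}\le(3/(8A_\Omega))^{1/4}\}$ shows the smallness propagates up to the time where the quadratic forces the contradiction. The required \emph{initial} smallness of the exterior mass is arranged not by sending $R\to\infty$ but by the scaling $u_{0\lambda}(x)=\lambda^{1/2}u_0(\lambda x)$, which preserves $\|u_0\|_{L^2}$, keeps $E<0$ (energy scales like $\lambda^{2}$), concentrates the datum in $|x|\le1$, and leaves the blow-up property invariant --- this is how the paper removes all auxiliary conditions when $E(u_0)\le-\delta/2$. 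A second, more technical, omission in your write-up: the sextic error carries the weight $(2-a_R'')$, which may degenerate on part of $\{|x|\ge R\}$, so the unweighted exterior Gagliardo--Nirenberg bound does not directly pair it against the good term $-4\int(2-a_R'')|u_x|^2\,dx$. One needs the weighted pointwise estimate of \eqref{xge1eps}, which rests on the structural inequality $|\varphi''|\lesssim(1-\varphi')^{1/2}$ --- the reason for the specific cubic truncation $x-(x-1)^3$ in \eqref{varphidef}, where $1-\varphi'=3(x-1)^2$ and $\varphi''=-6(x-1)$. This second point is fixable by choosing your $\theta$ with the analogous property $|\theta'''|\lesssim(2-\theta'')^{1/2}$, but the exterior-mass bootstrap as stated is circular and must be replaced by the variance-monotonicity plus scaling argument.
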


Regarding the half-line problem, Ackleh-Deng \cite{AASKD} proved the following blow-up result for \eqref{Model2a}-\eqref{Model2c} with $A_\Omega$ being a constant.

\begin{thm}[\cite{AASKD}]\label{AASKDThm}If $u_0\in H^3(\mathbb{R}_+)$ with $E(u_0)<0$ and $r\ge 2$, then there is $T>0$ such that $$\displaystyle\lim_{t\rightarrow T^-}\|u_x(t)\|_{L^2(\mathbb{R}_+)}=\infty.$$\end{thm}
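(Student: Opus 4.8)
The plan is to run the classical variance (virial) convexity argument of Glassey, adapted to the half-line with the nonlinear Robin condition \eqref{Model2b}. Throughout I take $A_\Omega\equiv A>0$ constant and use, as is implicit in this setting, that the datum has finite variance, $xu_0\in L^2(\mathbb{R}_+)$. First I would record the two conservation laws: the mass $\|u(t)\|_{L^2(\mathbb{R}_+)}$ and the energy
$$E(u(t))=\frac{1}{2}\|u_x(t)\|_{L^2(\mathbb{R}_+)}^2-\frac{A}{r+2}|u(t,0)|^{r+2},$$
whose time-invariance follows by pairing \eqref{Model2a} with $\bar u$ (respectively $\bar u_t$), integrating over $(0,\infty)$, and using \eqref{Model2b} to cancel the boundary contributions at $x=0$.

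Next I would introduce the variance $V(t)=\int_0^\infty x^2|u(t,x)|^2\,dx$ and differentiate twice. Using $u_t=iu_{xx}$ and integrating by parts (the weight $x^2$ annihilates the origin term), one gets $V'(t)=4\int_0^\infty x\,\mathrm{Im}(\bar u u_x)\,dx$. Differentiating again and integrating by parts once more, the factor $x$ still kills the boundary term, reducing matters to $-4\int_0^\infty \mathrm{Re}(\bar u u_{xx}-|u_x|^2)\,dx$; the remaining integration by parts is exactly where \eqref{Model2b} enters and produces the term $A|u(t,0)|^{r+2}$. Substituting the energy identity $\|u_x\|^2=2E+\tfrac{2A}{r+2}|u(t,0)|^{r+2}$ then yields the virial relation
$$V''(t)=16\,E(u_0)+\frac{4A(2-r)}{r+2}\,|u(t,0)|^{r+2}.$$

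The structural point is that for $r\ge 2$ and $A>0$ the boundary term is non-positive, so $V''(t)\le 16\,E(u_0)<0$ on the interval of existence (for $r=2$ it vanishes and the identity is exactly $V''=16E$). Integrating this inequality twice gives $V(t)\le V(t_0)+V'(t_0)(t-t_0)+8E(u_0)(t-t_0)^2$, a downward parabola that must reach zero at some finite time. Since $V(t)\ge 0$, the solution cannot persist that long, so the maximal existence time $T$ is finite. Finally, invoking the blow-up alternative of the local theory together with conservation of mass (which pins $\|u\|_{L^2}$) upgrades finite-time non-existence to $\lim_{t\to T^-}\|u_x(t)\|_{L^2(\mathbb{R}_+)}=\infty$.

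The main obstacle I anticipate is the $V''$ computation, namely the careful bookkeeping of the boundary traces at $x=0$ across the two integrations by parts: this is precisely where \eqref{Model2b} is used and where the sign-determining factor $(2-r)$ is produced. It is also the reason the hypothesis demands $u_0\in H^3(\mathbb{R}_+)$ rather than merely $H^1$ — one needs enough regularity and decay to justify that $V$ is twice differentiable, that the traces of $u_x$ and $u_{xx}$ exist and behave as claimed, and that every term dropped at infinity genuinely vanishes.
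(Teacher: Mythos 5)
Your virial computation itself checks out: with your normalization (which is half of the paper's \eqref{energy1}, so $E(u_0)<0$ means the same thing), the identity $V''(t)=16E(u_0)+\tfrac{4A(2-r)}{r+2}|u(t,0)|^{r+2}$ does follow from \eqref{Model2a}--\eqref{Model2b}, and the concavity argument is sound \emph{for finite-variance data}. The genuine gap is the clause ``as is implicit in this setting, that the datum has finite variance, $xu_0\in L^2(\mathbb{R}_+)$'': this is not implicit, it is simply absent from the hypotheses, and its absence is the whole point of the result as this paper uses it. $u_0\in H^3(\mathbb{R}_+)$ does not imply $xu_0\in L^2(\mathbb{R}_+)$ --- take $u_0(x)=(1+x)^{-1}$, adjusted near $x=0$ to meet the compatibility condition; then $u_0\in H^3$ but $xu_0\notin L^2$. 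The abstract and conclusion explicitly record that blow-up for \emph{infinite-momentum} data with constant coefficient was already known from \cite{AASKD} and \cite{OT}, and the numerical example of Section \ref{NumSec} is built on a datum with $\|xu_0\|_{L^2}=\infty$. For such data your $V(t)$ is identically infinite and the Glassey argument never starts, so your proposal proves a strictly weaker statement.

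What the paper (following \cite{AASKD}) does instead is replace the weight $x^2$ by the rapidly decaying $\varphi=(x^2+x)e^{-x}$ of \eqref{varphi2def}, whose primitive $\Psi(x)=\int_0^x\varphi(y)\,dy$ is \emph{bounded} (since $\varphi\in L^1$), so that $\int_0^\infty\Psi|u_0|^2\,dx\le C\|u_0\|_{L^2}^2<\infty$ for every $L^2$ datum. Lemma \ref{virialB} then gives a virial identity whose boundary bookkeeping matches yours --- the normalizations $\varphi(0)=0$, $\varphi'(0)=1$, $\varphi''(0)=0$ reproduce exactly the term $-a(2\Omega t)|u(t,0)|^{r+2}$ --- but the exact identity is lost: one pays an error $-\tfrac12\int\varphi'''|u|^2\,dx$, controlled via mass conservation by $\delta=\tfrac12\|\varphi'''\|_\infty\|u_0\|_2^2$ (Lemma \ref{boundaryest}), while $|\varphi'|\le 1$ and the energy substitution again produce your sign-determining factor, here $\tfrac{2-r}{2+r}$. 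The conclusion comes from the nonnegativity of $\int\Psi|u|^2$ against a quadratic upper bound $\alpha+\lambda\tau+\beta\tau^2$ with $\beta=2E(u_0)+\delta$, rather than from $V''\le 16E$; to get blow-up from the bare hypothesis $E(u_0)<0$ one must then make $\delta$ small relative to $|E(u_0)|$, which \cite{AASKD} handles by rescaling the weight (in the present paper the correction is instead built into the hypotheses $E(u_0)\lessgtr-\delta/2$ of Theorem \ref{mainresult-hl}). A secondary soft spot: your final step invokes ``the blow-up alternative,'' but the local theory for $H^3$ data yields an $H^3$ alternative, not directly $\|u_x(t)\|_{L^2}\to\infty$; one needs the standard extra observation that the existence time is controlled by the $H^1$ norm (cf.\ Lemma \ref{LocalWellP} with $s=1$ together with persistence of regularity), so that boundedness of $\|u_x\|_{L^2}$ plus conservation of mass would permit continuation.
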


Moreover, \cite{AASKD} proves in the same context that the critical exponent associated with \eqref{Model2a}-\eqref{Model2c} is equal to $r=2$.  More precisely, all local solutions turn out to be also global for $r<2$, and one can construct blow-up solutions otherwise.

Recently, \cite{VKTO} generalized the blow-up result in \cite{AASKD} (with $a$ still being only a constant) to the case where \eqref{Model2a} is replaced by $iu_t-u_{xx}+k|u|^pu+i\gamma u=0$ ($k,p>0,\gamma\ge 0$).  In this work, the authors have studied the interaction between the nonlinear focusing Robin type boundary source, the nonlinear defocusing interior source, and the weak damping term.  \cite{VKTO} proved that there are blow-up solutions as long as the focusing type boundary nonlinearity is sufficiently stronger than the defocusing type interior nonlinearity ($r>\max\{2,p-2\}$).  The authors showed that although the damping has no effect on preventing the blow-up, it has a rapid stabilizing effect for global solutions where the rate of decay depends on the relation between the powers of nonlinearities.

\begin{rem}In Theorem \ref{OTThm} (resp. Theorem \ref{AASKDThm}), $E(u_0)$ denotes the initial energy where the energy functional is defined by \eqref{energy} (resp. \eqref{energy1}).\end{rem}

The blow-up of solutions in the presence of oscillating nonlinearities is more interesting than the constant coefficient case, because it is well-known that oscillating sources create a stabilizing effect by extending the life time of the solutions, see Remark \ref{StabEff}.  The blow-up of solutions for nonlinear Schrödinger equations with oscillating nonlinearities has only been shown in $H^1\cap L^2(|x|^2dx)$; see \cite{DO} and \cite{ZZ}. This in particular implies that the momentum $\displaystyle \int |x|^2|u(x)|^2dx$ was assumed to be finite. However, it is well-known from the general theory of nonlinear Schrödinger equations that $H^1$ is sufficient when we desire the local well-posedness for \eqref{Model1a}-\eqref{Model1b}. See for example, \cite{Kato87}.  Therefore, the aim here is to eliminate the weight assumption of $L^2(|x|^2dx)$ in the presence of an oscillating source.  Elimination of the finite momentum assumption was already achieved by Ogawa-Tsutsumi \cite{OT} in the case $A_\Omega$ is constant and the initial energy is strictly negative by using a compactly supported weight function $\varphi\in W^{3,\infty}(\mathbb{R})$ in virial identities.  This function was given by
\begin{equation}\label{varphidef}
 \varphi(x) =
  \begin{cases}
      \hfill x    \hfill & \text{ if $|x|\le 1$}, \\
      \hfill x-(x-1)^3 \hfill & \text{ if $1<x \le  1+\frac{1}{\sqrt{3}}$}, \\
      \hfill x-(x+1)^3 \hfill & \text{ if $-\left(1+\frac{1}{\sqrt{3}}\right)\le x< -1$}, \\
      \hfill \text{smooth}    \hfill & \text{ if $1+\frac{1}{\sqrt{3}}\le |x|<2$}, \\
      \hfill 0    \hfill & \text{ if $2\le |x|$}, \\
  \end{cases}
\end{equation} together with $\varphi'(x)\le 0$ for $|x|\ge 1+\frac{1}{\sqrt{3}}.$  We show that a similar result also holds when the initial energy is non-negative and the source has a time dependent coefficient (e.g., an oscillating function).  Construction of blow-up solutions for the problem \eqref{Model2a}-\eqref{Model2c} is similar.  For this purpose, we will use the weight function \begin{equation}\label{varphi2def}
                          \varphi=(x^2+x)e^{-x}.
                        \end{equation}  These type of weight functions were used for example in \cite{AASKD} and \cite{OT2}.

\subsection{Main result}

We associate the real line problem \eqref{Model1a}-\eqref{Model1b} with the energy function
\begin{equation}\label{energy}
  E(u(t)) = \|u_x(t)\|_{L^2}^2-\frac{a(2\Omega t)}{3}\|u(t)\|_{L^{6}}^{6}.
\end{equation}

If $a$ were constant, one would have the classical nonlinear Schrödinger equation, and it is well-known that the associated mass and energy are both conserved quantities in this case.  This fact plays an important role in the global analysis of nonlinear Schrödinger equations.  However, when $a$ is not constant, then energy needs not be conserved.  Indeed, \eqref{Model1a}-\eqref{Model1b} has the following identities for mass and rate of change of energy:

\begin{equation}\label{density}
    \|u(t)\|_{L^2} =  \|u_0\|_{L^2},
\end{equation}

\begin{equation}\label{Denergy1}
    \frac{d}{dt}[E(u(t))] =  -a'(2\Omega t)\frac{2\Omega }{3}\|u(t)\|_{L^{6}}^{6}.
\end{equation}

Therefore, once we assume that $A_\Omega(t_0)>0$ and $A_\Omega'(t)\ge 0$ in time interval $[t_0,t_0+T)$, where $T>0$, then if the energy is initially negative, it will continue to stay negative within the same time interval $[t_0,t_0+T)$.

Regarding the half-line problem \eqref{Model2a}-\eqref{Model2c}, the associated energy function is given by
\begin{equation}\label{energy1}
  E(u(t)) = \|u_x(t)\|_{L^2(\mathbb{R}_+)}^2-a(2\Omega t)\frac{2}{r+2}|u(t,0)|^{r+2}.
\end{equation}

 When $a$ is a differentiable function of time, we have the following rate of change for energy:
  \begin{equation}\label{Denergy2}
    E'(u(t)) =  -a'(2\Omega t)\frac{4\Omega }{r+2}|u(t,0)|^{r+2}.
\end{equation}

We observe from \eqref{energy1} and \eqref{Denergy2} that both the energy and its rate of change are influenced by what is happening at the corner point $x=0$.

\subsection*{Notation}
Let us introduce the following notation to simplify the statements of main theorems:
$\Psi(x)\equiv \int_0^x\varphi(y) dy$, $\displaystyle\lambda \equiv -2\Im\int_G \varphi u_0\bar{u}_0' dx$, $\displaystyle\alpha\equiv \int_G\Psi|u_0|^2dx$,   $$\displaystyle \delta=\frac{1}{2}\sqrt{\frac{3}{8A_{\Omega}(t_0+T)}}\left(\|\varphi'''\|_\infty+{\max\left\{\sqrt{3}, \frac{1}{2}\|\varphi''\|_\infty\right\}^2}\right)$$ for the whole line problem, $\delta\equiv \frac{1}{2}\|\varphi'''\|_\infty\|u_0\|_2^2$ for the half-line problem, $\displaystyle\beta\equiv (2E(u_0)+\delta)$, $\displaystyle \gamma\equiv \left(\frac{2}{\beta}\|u_0'\|_{L^2(\mathbb{R})}+1\right)$, $\displaystyle \theta_{\pm}\equiv \frac{-\lambda\pm\sqrt{\lambda^2-4\alpha\beta}}{2\beta}$, $\theta_0=-\alpha/\lambda$,  $G=\mathbb{R}$ for the whole line problem, $G=\mathbb{R}_+$ for the half line problem.  $\varphi$ is chosen as in \eqref{varphidef} for the whole line problem and as in \eqref{varphi2def} for the half line problem.

Now, we state our main results for the whole line problem and the half-line problem (with $r\ge 2$), respectively:

\begin{thm}[Whole line problem]\label{mainresult-wl} Let $t_0\in\mathbb{R}$, $\Omega>0$ be fixed, $A_\Omega(t)= a(2\Omega t)$ for some real valued smooth function $a(t)$ such that $A_\Omega(t_0)>0$ with $A_\Omega'(t)\ge 0$ in $[t_0,t_0+T)$ for some $T>0$. If $u_0\in H^1(\mathbb{R})$ satisfies one of the following conditions: \begin{itemize}

                        \item[(i)] $E(u_0)> -\delta/2$, $\lambda < 0$, $\lambda^2>4\alpha\beta$, $T>\theta_-$, $\displaystyle \alpha+\beta\theta_{-}^2<\frac{1}{2}\left(\frac{3}{8A_\Omega(t_0+T)}\right)^{\frac{1}{2}}$,
                        \item[(ii)] $E(u_0)<-\delta/2$, $T>\theta_+$, $\displaystyle 2\alpha\gamma<\left(\frac{3}{8A_\Omega(t_0+T)}\right)^{\frac{1}{2}}$,
                        \item[(iii)] $E(u_0)= -\delta/2$, $\lambda < 0$, $T>\theta_0$, $\displaystyle \alpha<\frac{1}{2}\left(\frac{3}{8A_\Omega(t_0+T)}\right)^{\frac{1}{2}}$,
                      \end{itemize} then the corresponding solution of \eqref{Model1a}-\eqref{Model1b} must blow-up in finite time (more precisely before $t$ reaches $t_0+T$) in the sense that there is a time $T^*\in (0,T)$ which satisfies $$\displaystyle \lim_{t\uparrow t_0+T^*}\|u_x(t)\|_{L^2(\mathbb{R})} = \infty.$$
\end{thm}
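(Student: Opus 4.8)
The plan is to run a weighted virial (Morawetz-type) argument adapted to the oscillating coefficient, using the compactly supported weight idea of \cite{OT}. First I would set
\[ I(t)\equiv\int_{\mathbb{R}}\Psi(x)|u(t,x)|^2\,dx. \]
Since $\varphi$ is odd with $\varphi(x)\ge 0$ for $x\ge 0$, its antiderivative $\Psi$ is even and nonnegative, so $I(t)\ge 0$ on the whole existence interval; moreover $I(t_0)=\alpha$, and one integration by parts using \eqref{Model1a} gives $I'(t)=2\int\varphi\,\Im(\bar u u_x)\,dx$, whence $I'(t_0)=\lambda$. Differentiating once more and integrating by parts (twice in the linear part, once in the sextic part, all boundary terms vanishing by compact support of $\varphi$) I would establish the weighted virial identity
\[ I''(t)=4\int\varphi'|u_x|^2\,dx-\int\varphi'''|u|^2\,dx-\frac{4A_\Omega(t)}{3}\int\varphi'|u|^6\,dx, \]
which collapses to the classical $I''=4E(u(t))$ when $\varphi(x)=x$.

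The heart of the proof is to convert this identity into the differential inequality $I''(t)\le 2\beta$. Writing $I''-4E(u(t))=4\int(\varphi'-1)|u_x|^2-\frac{4A_\Omega}{3}\int(\varphi'-1)|u|^6-\int\varphi'''|u|^2$, the kinetic correction is $\le 0$ because $\varphi'\le 1$, and the $\varphi'''$ term is controlled by conserved mass \eqref{density}. The delicate object is the focusing sextic term $\frac{4A_\Omega}{3}\int(1-\varphi')|u|^6$, which is nonnegative and lives in the transition zone $|x|\ge 1$. I would bound it by a \emph{localized} Gagliardo--Nirenberg inequality and absorb the resulting kinetic piece into the good term $-4\int(1-\varphi')|u_x|^2$, the leftover producing precisely the constant $\delta$ (this is where $\|\varphi''\|_\infty$, $\|\varphi'''\|_\infty$ and the factor $\left(\frac{3}{8A_\Omega(t_0+T)}\right)^{1/2}$ enter). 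This yields $I''(t)\le 4E(u(t))+2\delta$; then, since $A_\Omega'\ge 0$ on $[t_0,t_0+T)$, the energy identity \eqref{Denergy1} forces $E'\le 0$, so $E(u(t))\le E(u_0)$ and hence $I''(t)\le 4E(u_0)+2\delta=2\beta$. I expect this absorption to be the main obstacle: unlike the classical finite-momentum argument, the sextic term must be quantified using only $I(t)$ itself rather than the full variance, which is exactly why the hypotheses carry smallness conditions on $I$, and why the nonnegative-energy bookkeeping (the $\delta$-shift) is tight.

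Integrating $I''\le 2\beta$ twice from $t_0$ gives $0\le I(t_0+s)\le \beta s^2+\lambda s+\alpha=:h(s)$ for $0\le s<T$, and the three hypotheses correspond to the sign of $\beta=2E(u_0)+\delta$. In case (i), $\beta>0$ and $h$ is an upward parabola with two positive roots (guaranteed by $\lambda<0$ and $\lambda^2>4\alpha\beta$), becoming negative just past its smaller root $\theta_-$; in case (iii), $\beta=0$ and $h$ is the line $\lambda s+\alpha$ with $\lambda<0$, vanishing at $\theta_0=-\alpha/\lambda$; in case (ii), $\beta<0$ and $h$ is a concave parabola which, since $\alpha>0$, must dip below zero in finite forward time, the crossing being placed inside the interval by $T>\theta_+$ together with the bound on $\gamma$. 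In each case the condition $T>\theta_{-/0/+}$ puts the zero of $h$ strictly inside $[t_0,t_0+T)$, while the auxiliary smallness conditions (for instance $\alpha+\beta\theta_-^2<\tfrac12\left(\frac{3}{8A_\Omega(t_0+T)}\right)^{1/2}$) keep $I(t)$ below the threshold for which the estimate of the previous paragraph stays valid up to that time. Once $h$ goes negative the bound $0\le I\le h$ is violated, so the solution cannot survive on all of $[t_0,t_0+T)$ with bounded $\|u_x\|_{L^2}$; by the $H^1$ blow-up alternative there is then $T^*\in(0,T)$ with $\|u_x(t)\|_{L^2(\mathbb{R})}\to\infty$ as $t\uparrow t_0+T^*$.
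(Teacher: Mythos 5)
Your proposal is correct and follows essentially the same route as the paper: your second-order inequality $I''(t)\le 2\beta$ is just the differentiated form of the paper's integrated virial identity (Lemma \ref{virial}), your localized Gagliardo--Nirenberg absorption producing $\delta$ is exactly Lemma \ref{interiorest} (with the same threshold $\|u(t)\|_{L^2(|x|\ge 1)}^4<\frac{3}{8A_\Omega(t_0+T)}$), and the quadratic sign analysis plus blow-up alternative matches Section \ref{Theorem1Details}. The only step you gesture at rather than execute --- that the smallness conditions on $\alpha,\beta,\theta_\pm$ actually keep $\|u(t)\|_{L^2(|x|\ge 1)}$ below threshold up to the relevant time, which requires the continuity/contradiction bootstrap of Lemma \ref{contra} via $\|u\|_{L^2(|x|\ge 1)}^2\le 2\int\Psi|u|^2$ --- is correctly identified and poses no obstruction.
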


\begin{thm}[Half line problem]\label{mainresult-hl} Let $t_0\in\mathbb{R}$, $\Omega>0$ be fixed, $A_\Omega(t)= a(2\Omega t)$ for some real valued smooth function $a(t)$ such that $A_\Omega(t_0)>0$ with $A_\Omega'(t)\ge 0$ in $[t_0,t_0+T)$ for some $T>0$. If $u_0\in H^1(\mathbb{R}_+)$ satisfies one of the following conditions: \begin{itemize}
                        \item[(i)] $E(u_0)> -\delta/2$, $\lambda < 0$, $\lambda^2>4\alpha\beta$, $T>\theta_-$,
                        \item[(ii)] $E(u_0)<-\delta/2$, $T>\theta_+$,
                        \item[(iii)] $E(u_0)= -\delta/2$, $T>\theta_0$, $\lambda < 0$,
                      \end{itemize} then the corresponding solution of \eqref{Model2a}-\eqref{Model2c} must blow-up in finite time (more precisely before $t$ reaches $t_0+T$) in the sense that there is a time $T^*\in (0,T)$ which satisfies $$\displaystyle \lim_{t\uparrow t_0+T^*}\|u_x(t)\|_{L^2(\mathbb{R}_+)} = \infty.$$ \end{thm}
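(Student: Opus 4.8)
The plan is to run a localized virial (concavity) argument built on the rapidly decaying weight $\varphi=(x^2+x)e^{-x}$, as the notation $\Psi,\alpha,\lambda,\beta$ already anticipates. First I would set $V(t)=\int_{\mathbb{R}_+}\Psi(x)|u(t,x)|^2\,dx$. Since $\varphi\ge 0$ on $\mathbb{R}_+$, its antiderivative $\Psi$ is nonnegative and bounded, so $V(t)$ is finite and nonnegative with $V(t_0)=\alpha$. Differentiating once, substituting $u_t=iu_{xx}$, and integrating by parts (the boundary term at $x=0$ vanishes because $\Psi(0)=0$, and the term at $+\infty$ vanishes by the decay of $\varphi$ and $u$), I would obtain $V'(t)=2\int_{\mathbb{R}_+}\varphi\,\Im(\bar u u_x)\,dx$, hence $V'(t_0)=\lambda$.

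The heart of the matter is the second derivative. Differentiating $V'$, using $u_t=iu_{xx}$, and integrating by parts repeatedly, I expect to reach
\[
V''(t)=4\int_{\mathbb{R}_+}\varphi'|u_x|^2\,dx-\int_{\mathbb{R}_+}\varphi'''|u|^2\,dx+2\,\Re\bigl(\bar u(t,0)u_x(t,0)\bigr),
\]
where all boundary contributions at $x=0$ except one cancel because a direct computation from $\varphi=(x^2+x)e^{-x}$ gives $\varphi(0)=0$, $\varphi''(0)=0$, $\varphi'(0)=1$. Feeding in the nonlinear Robin condition $u_x(t,0)=-A_\Omega(t)|u(t,0)|^r u(t,0)$ turns the surviving boundary term into $-2A_\Omega(t)|u(t,0)|^{r+2}$; this is precisely the mechanism by which the boundary nonlinearity, rather than an interior source, drives the collapse.

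Next I would convert this identity into a differential inequality. Writing the boundary term through the energy via $2A_\Omega(t)|u(t,0)|^{r+2}=(r+2)\bigl(\|u_x\|_{L^2(\mathbb{R}_+)}^2-E(u(t))\bigr)$ and invoking three structural facts — $\varphi'\le 1$ on $\mathbb{R}_+$, the hypothesis $r\ge 2$, and the bound $\|u_x\|^2\ge E(u(t))$ (valid because $A_\Omega(t)\ge A_\Omega(t_0)>0$ throughout $[t_0,t_0+T)$) — I can combine the kinetic terms: after $\varphi'\le1$ the kinetic part is at most $(2-r)\|u_x\|^2+(r+2)E(u(t))$, and then $2-r\le0$ together with $\|u_x\|^2\ge E(u(t))$ collapses this to $4E(u(t))$. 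Using mass conservation $\|u(t)\|_{L^2}=\|u_0\|_{L^2}$ in the $\varphi'''$ term gives $V''(t)\le 4E(u(t))+2\delta$. Finally, $A_\Omega'\ge0$ forces $E'(u(t))\le0$ by \eqref{Denergy2}, so $E$ is non-increasing and $E(u(t))\le E(u_0)$, whence $V''(t)\le 4E(u_0)+2\delta=2\beta$ on $[t_0,t_0+T)$. The crucial point — and the reason non-negative initial energy is permitted here — is that the estimate $\|u_x\|^2\ge E(u(t))$ lets me absorb the supercritical excess $(2-r)\|u_x\|^2\le0$ instead of discarding energy.

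Integrating $V''\le 2\beta$ twice from $t_0$ yields $0\le V(t)\le \alpha+\lambda(t-t_0)+\beta(t-t_0)^2=:Q(t-t_0)$, and the proof closes by showing that in each regime the hypotheses force $Q$ to become negative at some time in $(0,T)$, which is impossible while the $H^1(\mathbb{R}_+)$ solution persists; hence $\|u_x(t)\|_{L^2(\mathbb{R}_+)}$ must diverge first. When $\beta>0$ (case (i)) the conditions $\lambda<0$ and $\lambda^2>4\alpha\beta$ make $Q$ an upward parabola with two positive roots, negative between $\theta_-$ and $\theta_+$, so $T>\theta_-$ suffices; when $\beta=0$ (case (iii)) $Q=\alpha+\lambda s$ is affine with $\lambda<0$, vanishing at $\theta_0=-\alpha/\lambda>0$; when $\beta<0$ (case (ii)) $Q$ is a downward parabola with $Q(0)=\alpha\ge0$ that becomes negative beyond its positive root, so the threshold on $T$ again places such a time inside $(0,T)$. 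I expect the main obstacle to be the rigorous justification of the $V''$ computation: the repeated integrations by parts and the vanishing of the boundary terms at $+\infty$ demand sufficient regularity and decay, so I would establish the identity first for the smooth, rapidly decaying solutions supplied by the local theory and then pass to general $H^1(\mathbb{R}_+)$ data by approximation, taking particular care that the nonlinear boundary term is stable under this limit.
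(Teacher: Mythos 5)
Your proposal is correct and takes essentially the same route as the paper: the same weight $\varphi=(x^2+x)e^{-x}$ with $\varphi(0)=0$, $\varphi'(0)=1$, $\varphi''(0)=0$, the same virial computation (your direct $V''$ identity is exactly Lemma \ref{virialB} combined with the $\Psi$-identity), the same bound $V''\le 2\beta$ using $|\varphi'|\le 1$, $r\ge 2$, mass conservation, and energy monotonicity from \eqref{Denergy2} — your absorption of $(2-r)\|u_x\|^2$ via $\|u_x\|^2\ge E(u(t))$ is algebraically identical to the paper's discarding of the nonpositive term $\frac{2-r}{2+r}a(2\Omega s)|u(s,0)|^{r+2}$ in Lemma \ref{boundaryest} — followed by the same quadratic-polynomial contradiction in the three regimes for $\beta$. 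The differences are purely organizational (you package the two virial lemmas into one second-derivative computation), and your closing remark on justifying the formal integrations by parts through regularization matches the paper's own treatment.
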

\subsection{Scaling argument} Theorems \ref{mainresult-wl} and \ref{mainresult-hl} give sufficient conditions on the initial datum $u_0$ for blow-up to occur once $t_0,\Omega,$ and $A_\Omega(\cdot)$ are given.  The natural question to ask is whether the given sufficient conditions on $u_0$ are void and one might wonder the answer to the following general question:

\begin{prob}
  Given $t_0$, $\Omega$, $A_\Omega(\cdot)$ and $T$ as in Theorem \ref{mainresult-wl} (or Theorem \ref{mainresult-hl}), can you find an initial datum $u_0$ for each sufficient condition given in the theorems that guarantees the blow-up of the corresponding solution?
\end{prob}

The answer to the above question is `yes' for sufficient conditions (ii) and (iii) in both theorems and one can see this through a scaling argument. Indeed, let $t_0$, $\Omega$, and $A_\Omega(\cdot)$ be such that $A_\Omega(t_0)>0$ with $A_\Omega'(t)\ge 0$ in $[t_0,t_0+T)$ for some $T>0$, and say we consider the whole line problem (Theorem \ref{mainresult-wl}). Let us for instance start with taking an initial datum $u_0\in H^1(\Omega)$ with negative initial energy ($E(u_0)<0$). As a second step, consider the scaling given by $\displaystyle u_{0\mu}^\rho(x):=\frac{\mu}{\sqrt{\rho}}u_0\left(\frac{x}{\rho}\right)$ with $\mu>0$ and $\rho>0$. It follows by a straightforward change of variables that this scaling argument changes the $L^2-$norm proportional to $\mu$ while changing the energy proportional to $\mu^2/\rho^2$.  More precisely, one has $\displaystyle\|u_{0\mu}^\rho\|_{L^2}=\mu\|u_0\|_{L^2}$ and $\displaystyle E(u_{0\mu}^\rho)=\frac{\mu^2}{\rho^2}E(u_0)$. Therefore, for $\mu=\left(-\frac{\delta\rho^2}{E(u_0)}\right)^\frac{1}{2}$, one has $E(u_{0\mu}^\rho)=-\delta<-\delta/2$, as desired in Theorem \ref{mainresult-wl} (ii).  Note that the same scaling has the effect $\alpha\sim \mu^2$, while other parameters satisfy $\displaystyle\lambda \sim \frac{\mu^2}{\rho}$, $\displaystyle\beta\sim \frac{\mu^2}{\rho^2}$, $\displaystyle\theta_+ \sim \rho$, and $\displaystyle\gamma\sim \text{ constant}$.  These imply that for sufficiently small but fixed $\rho>0$, the conditions $T>\theta_+$, $\displaystyle 2\alpha\gamma<\left(\frac{3}{8A_\Omega(t_0+T)}\right)^{\frac{1}{2}}$ can be guaranteed to hold since $\theta_+$ and $2\alpha\gamma$ can be made as small as desired by choosing $\rho$ small. Hence, we have just shown that one can always construct an initial datum satisfying all the assumptions in Theorem \ref{mainresult-wl} (ii).

Regarding the sufficient condition in Theorem \ref{mainresult-wl} (iii), we again start with an initial datum $u_0\in H^1(\Omega)$ with negative initial energy ($E(u_0)<0$), and we consider the similar scaling given by $\displaystyle u_{0\mu}^\rho(x):=\frac{\mu}{\sqrt{\rho}}u_0\left(\frac{x}{\rho}\right)$ with $\mu>0$ and $\rho>0$.  Note that the parameters satisfy $\displaystyle\alpha\sim \mu^2$ and $\displaystyle E(u_{0\mu}^\rho) = \frac{\mu^2}{\rho^2}E(u_0)$, $\theta_0\sim \rho$. Therefore, taking $\displaystyle \mu=\left(-\frac{\delta\rho^2}{2E(u_0)}\right)^{\frac{1}{2}}$ and $\rho$ suitably small, we can make $\alpha,\theta_0$ as small as we wish so that the conditions $\displaystyle \alpha<\frac{1}{2}\left(\frac{3}{8A_\Omega(t_0+T)}\right)^{\frac{1}{2}}$ and $T>\theta_0$ are certainly satisfied. By the choice of $\mu$, we also have $E(u_{0\mu}^\rho)=-\delta/2$. Hence, $u_{0\mu}^\rho$ becomes a sought after initial datum for which the corresponding solution blows up.

The scaling arguments given in the two paragraphs above work pretty much in the same way also for the half-line problem  (Theorem \ref{mainresult-hl} (ii)-(iii)).

On the other hand, the scaling does not seem to provide a proof for the argument that one can always construct some initial data satisfying the sufficient conditions given in Theorems \ref{mainresult-wl} (i) and \ref{mainresult-hl} (i) because  for instance both the left and right hand side of the inequality $\lambda^2>4\alpha\beta$ are affected the same way from the scaling.  However, from the calculations relevant to the construction of the numerical example given in Section \ref{NumSec}, we can say that the scaling still works in practice.  Intuitively, this is due to the fact that physically the likelihood of the collapse gets bigger as the nonlinear effect gets larger which is exactly what is achieved via scaling.  Therefore, this method can still be used as a heuristic also in the case $E(u_0)>-\delta/2$ to find a suitable initial datum for which the corresponding solution blows up.

\begin{rem}
  \begin{itemize}
    \item[(1)] It turns out that no condition as $\displaystyle\alpha+\beta\theta_{-}^2<\frac{1}{2}\left(\frac{3}{8A_\Omega(t_0+T)}\right)^{\frac{1}{2}}$ in Assumption (i) or as $\displaystyle 2\alpha\gamma<\left(\frac{3}{8A_\Omega(t_0+T)}\right)^{\frac{1}{2}}$ in Assumption (ii) is necessary in the case of the half-line problem.  Compare Theorems \ref{mainresult-wl} and \ref{mainresult-hl} above.
    \item[(2)] In the case $E(u_0)\le -\delta/2$, it is actually possible to remove the conditions $T>\theta_+$, $T>\theta_0$, $\displaystyle 2\alpha\gamma<\left(\frac{3}{8A_\Omega(t_0+T)}\right)^{\frac{1}{2}}$, and $\displaystyle \alpha<\frac{1}{2}\left(\frac{3}{8A_\Omega(t_0+T)}\right)^{\frac{1}{2}}$  in Assumptions (ii)-(iii) of Theorem \ref{mainresult-wl} (also Theorem \ref{mainresult-hl}). This follows by using the scaling argument explained above and the fact that $u_{0\mu}^\rho$ blows up if and only if $u_0$ blows up.

  \end{itemize}
\end{rem}
\subsection{Comparison with constant coefficient case}The blow-up results given in Theorems \ref{mainresult-wl} and \ref{mainresult-hl} are interesting when $A_\Omega$ is non-constant.  In order to see this, one can for example take $A_\Omega(t)$ as an oscillating function of $t$. Then these oscillations will expand the life span of solutions \cite{DO}.    This can be easily seen also from the proof of the local well-posedness. For instance, the example given in Remark \ref{StabEff} shows that the life-span of solutions must at least double if one takes $A_\Omega(t)=\cos^2(\Omega t)$ with large $\Omega$ compared to the case $A_\Omega$ is a constant.  Therefore, the solutions which normally blow-up at some particular time $T_0$ for the constant coefficient case will no longer blow-up at least up to the time $2T_0$.  Thus, the question is whether the stabilizing effect of these oscillations is strong enough to turn any blowing-up solution into a global one.  We answer this question in Theorems \ref{mainresult-wl} and \ref{mainresult-hl} in the negative. We show that one can always find some suitable initial data for which the corresponding solution will be steered to infinity in $H^1$ norm as long as there is a little bit of chance for the time dependent coefficient to keep its strict positiveness continuously in a subinterval of time.

The blow-up of solutions in the work of Ogawa-Tsutsumi \cite{OT} is given under the condition $E(u_0)<0$ (strictly negative initial energy).  Theorems \ref{mainresult-wl} and \ref{mainresult-hl} give more general sets of conditions which must be satisfied by $u_0$, because we prove that blow-up of solutions is also possible in the case of non-negative initial energy if $u_0$ also satisfies additional restrictions.  The associated conditions \[\frac{\Im\int_G \varphi u_0\bar{u}_0' dx\mp\sqrt{\left(\Im\int_G \varphi u_0\bar{u}_0' dx\right)^2-(2E(u_0)+\delta)\int_G\Psi |u_0|^2dx}}{2E(u_0)+\delta}<T\] that we give in Theorems \ref{mainresult-wl} and \ref{mainresult-hl} are automatically satisfied in the work \cite{OT} since they only consider the case $T=\infty$.  Note that the last condition enables one to choose the correct initial data to obtain solutions which blow-up before time $T$ independent of how small $T$ is.

\section{Interior Oscillations}
In this subsection, we prove Theorem \ref{mainresult-wl} by using the method in \cite{OT} taking into account that $A_{\Omega}$ is now non-constant and the energy is allowed to be non-negative.

\subsection{Local Well-Posedness}\label{lwp}
The proof of the local well-posedness of $H^1$ and $H^2$ type solutions for \eqref{Model1a}-\eqref{Model1b} is classical and one has the following result.
\begin{lem}[see \cite{Cazenave}] Let $u_0\in H^1(\mathbb{R})$ ($H^2(\mathbb{R})$).  Then \eqref{Model1a}-\eqref{Model1b} is locally well-posed in $H^1$ ($H^2$), i.e., there exists a unique solution  $u\in C(t_0, t_0+T_0;H^1)$ $(u\in C(t_0, t_0+T_0;H^2))$ for some $T_0>0$, where $T_0$ depends on the respective norm of $u_0$.  Moreover, $u$ also satisfies the $H^1$ blow-up alternative.\end{lem}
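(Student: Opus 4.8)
The plan is to solve \eqref{Model1a}-\eqref{Model1b} by a contraction-mapping argument applied to the Duhamel formulation. Writing $S(t)=e^{it\partial_x^2}$ for the free Schrödinger group and recasting \eqref{Model1a} as $u_t=iu_{xx}+iA_\Omega(t)|u|^4u$, a function $u$ solves the problem precisely when it is a fixed point of
\[
\Phi(u)(t) := S(t-t_0)u_0 + i\int_{t_0}^t S(t-s)\,A_\Omega(s)|u(s)|^4u(s)\,ds.
\]
I would work in the complete metric space $X_{R,T_0}:=\{u\in C([t_0,t_0+T_0];H^1(\mathbb{R})):\sup_t\|u(t)\|_{H^1}\le R\}$ with $R:=2\|u_0\|_{H^1}$. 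Two facts do the heavy lifting: first, $S(t)$ is a unitary group on every $H^s(\mathbb{R})$, so it costs nothing in the $H^1$ norm; second, $H^1(\mathbb{R})$ is a Banach algebra (indeed $H^s(\mathbb{R})$ is an algebra for $s>1/2$), equivalently $H^1(\mathbb{R})\hookrightarrow L^\infty(\mathbb{R})$, so the energy space controls the nonlinearity pointwise.

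Using the algebra property, and noting $|u|^4u=u^3\bar u^2$ is a polynomial in $u,\bar u$, one has $\||u|^4u\|_{H^1}\le C\|u\|_{H^1}^5$ together with the difference estimate
\[
\big\||u|^4u-|v|^4v\big\|_{H^1}\le C\big(\|u\|_{H^1}^4+\|v\|_{H^1}^4\big)\|u-v\|_{H^1}.
\]
Since $a$ is smooth, $A_\Omega(s)=a(2\Omega s)$ is bounded on the compact interval; set $M:=\sup_{[t_0,t_0+T_0]}|A_\Omega|$. Combining these with the unitarity of $S$ and integrating in time yields $\sup_t\|\Phi(u)(t)\|_{H^1}\le\|u_0\|_{H^1}+CMT_0R^5$ and $\sup_t\|\Phi(u)(t)-\Phi(v)(t)\|_{H^1}\le CMT_0R^4\sup_t\|u(t)-v(t)\|_{H^1}$. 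Choosing $T_0=T_0(\|u_0\|_{H^1})$ small enough that $CMT_0R^4\le\tfrac12$ makes $\Phi$ a self-map of $X_{R,T_0}$ and a contraction, so Banach's fixed point theorem produces a unique solution with $T_0$ depending only on $\|u_0\|_{H^1}$, as claimed. The $H^2$ statement follows verbatim, replacing $H^1$ by $H^2$ (again an algebra on $\mathbb{R}$), with time-continuity upgraded by the usual regularity bootstrap.

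For the blow-up alternative I would argue by continuation. Because the existence time produced by the fixed-point step depends only on the $H^1$ norm of the data at the initial instant, the solution can be restarted from any later time $t_1$ for a further interval of length $T_0(\|u(t_1)\|_{H^1})$. Hence, if $T_{\max}$ denotes the maximal existence time and $T_{\max}<\infty$, the $H^1$ norm cannot remain bounded as $t\uparrow t_0+T_{\max}$, for otherwise a uniform lower bound on the step size would permit continuation past $T_{\max}$. Thus $\limsup_{t\uparrow t_0+T_{\max}}\|u(t)\|_{H^1}=\infty$; since the mass is conserved by \eqref{density}, $\|u(t)\|_{L^2}=\|u_0\|_{L^2}$ is constant, so this is equivalent to $\lim_{t\uparrow t_0+T_{\max}}\|u_x(t)\|_{L^2}=\infty$, which is exactly the form of the alternative invoked in Theorem \ref{mainresult-wl}.

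I do not expect a genuine obstacle, as the result is classical and the cited references \cite{Cazenave}, \cite{Kato87} treat the autonomous case. Only two points deserve a line of verification. First, the quintic power, although $L^2$-critical in one dimension, is energy-\emph{sub}critical: there is no energy-critical exponent on $\mathbb{R}$ because $H^1\hookrightarrow L^\infty$, which is precisely why the fixed-point time depends on $\|u_0\|_{H^1}$ and why no Strichartz machinery is needed. Second, the only non-autonomous feature, the coefficient $A_\Omega(t)$, is harmless because it is continuous and hence bounded on compact time intervals, entering every estimate solely through the constant $M$. Confirming these two points is the entire content of adapting the standard theory to \eqref{Model1a}-\eqref{Model1b}.
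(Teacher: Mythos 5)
Your proposal is correct and is essentially the classical argument that the paper itself supplies no proof for, delegating it to \cite{Cazenave}: in one dimension the quintic nonlinearity is $H^1$-subcritical, $H^1(\mathbb{R})$ is a Banach algebra, and the time-dependent coefficient $A_\Omega$ (bounded, since $a$ is smooth and periodic) enters every estimate only through a constant, so the Duhamel contraction and continuation arguments go through exactly as you describe. One cosmetic point: your restart argument, applied along an arbitrary sequence $t_n\uparrow t_0+T_{\max}$ along which the $H^1$ norms were bounded, directly yields $\lim_{t\uparrow t_0+T_{\max}}\|u_x(t)\|_{L^2}=\infty$ (the form invoked in Theorem \ref{mainresult-wl}), so there is no need to pass through a $\limsup$, which mass conservation alone would not upgrade to a full limit.
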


\begin{rem}[Stabilizing Effect]\label{StabEff}It is worth mentioning that oscillating coefficients create a stabilizing effect by extending the life time of the solution \cite{DO}. For example, let's say $A_\Omega(t)=\cos^2(\Omega t)$, and $t_0=0$. Then $$\int_0^{T_0}\cos^2(\Omega t)dt=\frac{{T_0}}{2}+\frac{\sin(2\Omega {T_0})}{4\Omega}\sim \frac{{T_0}}{2}$$ for large $\Omega.$   Taking this approximation into account, one can deduce that the lifetime of the solution is almost doubled compared to the case $\Omega=0.$  For more details on the stabilizing effect, see \cite[Prop. 4, Prop. 5]{DO}.\end{rem}

\subsection{Virial Identities, and Estimates} We have the following virial identities.
\begin{lem}\label{virial}Let $u\in C(t_0,t_0+T_0; H^1)$ be a solution of \eqref{Model1a}-\eqref{Model1b}.  Then,
\begin{multline}
  -\Im \int \varphi u\bar{u}_x dx + \Im \int \varphi u_0\bar{u}_0' dx \\
  = \int_{t_0}^t\left[2\int\varphi'|u_x|^2dx-\frac{2a(2\Omega s)}{3}\int \varphi'|u|^{6}dx-\frac{1}{2}\int\varphi'''|u|^2dx\right]ds,\\
  =\int_{t_0}^t\left[2E(u_0)-2\int_{|x|\ge {1}}(1-\varphi')|u_x|^2dx+\frac{2a(2\Omega s)}{3}\int_{|x|\ge {1}}(1-\varphi')|u|^{6}dx\right]ds\\
  +\int_{t_0}^t\left[-\frac{2\Omega }{3}\int_{t_0}^sa'(2\Omega \tau)\|u(\tau)\|_{L^{6}}^{6}d\tau-\frac{1}{2}\int\varphi'''|u|^2dx\right]ds
\end{multline}

and

$$\int \Psi |u|^2 dx = \int \Psi|u_0|^2dx -2\int_{t_0}^t\Im \int \varphi u\bar{u}_xdxds$$ for $t\in [t_0,t_0+T_0)$.
\end{lem}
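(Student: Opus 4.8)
The two identities are localized virial (momentum--type) identities, and the plan is to derive them first for smooth solutions and then pass to $H^1$ by approximation. All integrals are over $\mathbb{R}$, and the essential structural feature that makes them finite without any weighted-$L^2$ assumption on $u_0$ is that $\varphi$ in \eqref{varphidef} is bounded with bounded derivatives and compactly supported, so that $\Psi$ is bounded and every spatial integral below converges for $u\in H^1$ while all boundary contributions at $\pm\infty$ vanish.

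I would first record the pointwise mass-density evolution. Writing \eqref{Model1a} as $u_t = i u_{xx} + iA_\Omega(t)|u|^4u$ and using that the nonlinear term is $i$ times a real multiple of $u$, one gets $\partial_t|u|^2 = 2\Re(\bar u u_t) = -2\Im(\bar u u_{xx})$, the nonlinearity contributing nothing. For the second identity, multiply by $\Psi$ and integrate to get $\frac{d}{dt}\int\Psi|u|^2\,dx = -2\Im\int\Psi\bar u u_{xx}\,dx$. One integration by parts (no boundary term, by compact support of $\Psi'=\varphi$ and decay of $u$) gives $\int\Psi\bar u u_{xx} = -\int\varphi\bar u u_x - \int\Psi|u_x|^2$; since $\int\Psi|u_x|^2$ is real it drops under $\Im$, leaving $\frac{d}{dt}\int\Psi|u|^2 = 2\Im\int\varphi\bar u u_x = -2\Im\int\varphi u\bar u_x$. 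Integrating from $t_0$ to $t$ yields the second identity.

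The first identity is the localized momentum law. I would set $P(t) = \Im\int\varphi u\bar u_x\,dx = \tfrac{1}{2i}\int\varphi(u\bar u_x - \bar u u_x)\,dx$, differentiate in $t$, substitute $u_t,\bar u_t$ from the PDE, and integrate by parts. The kinetic part produces $-2\int\varphi'|u_x|^2$; the dispersive part produces $\tfrac12\int\varphi'''|u|^2$ after the integrations by parts that turn $\int\varphi''\partial_x|u|^2$ into $-\int\varphi'''|u|^2$; and the nonlinear part, using $\partial_x|u|^6 = 3|u|^4\partial_x|u|^2$ and one integration by parts, produces $\tfrac{2A_\Omega(t)}{3}\int\varphi'|u|^6$. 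This gives $P'(t) = -2\int\varphi'|u_x|^2 + \tfrac{2a(2\Omega t)}{3}\int\varphi'|u|^6 + \tfrac12\int\varphi'''|u|^2$, and integrating $-P'(s)$ over $[t_0,t]$ produces exactly the first displayed form. To reach the second form I would split $\varphi' = 1 - (1-\varphi')$ in the first two terms, recognize $2\int|u_x|^2 - \tfrac{2a}{3}\int|u|^6 = 2E(u(s))$ from \eqref{energy}, note that $1-\varphi'$ is supported in $\{|x|\ge 1\}$ since $\varphi(x)=x$ there, and finally replace $2E(u(s))$ by $2E(u_0) + 2\int_{t_0}^s E'(u(\tau))\,d\tau$ using the energy-dissipation identity \eqref{Denergy1}.

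The one genuine obstacle is rigor: the differentiation under the integral sign and the repeated integrations by parts require $u(t)\in H^2$ (so that $u_{xx}$ and $u_{xt}$ are honest $L^2$ functions), whereas the lemma is stated for $u\in C(t_0,t_0+T_0;H^1)$. I would resolve this in the standard way---prove both identities for $H^2$ solutions, whose existence and persistence are guaranteed by the local well-posedness lemma, and then approximate $u_0\in H^1$ by $u_0^n\in H^2$, passing to the limit via continuous dependence in $C([t_0,t];H^1)$ on compact subintervals together with the continuity of every functional in the identities with respect to the $H^1$ topology. The latter uses only the boundedness of $\varphi,\varphi',\varphi''',\Psi$ for the quadratic terms and the embedding $H^1(\mathbb{R})\hookrightarrow L^6(\mathbb{R})$ for the sextic term $\int\varphi'|u|^6$. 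Because $\varphi$ has compact support, no decay of $u$ at infinity beyond $H^1$ is ever needed, which is precisely the point of the construction.
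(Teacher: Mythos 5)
Your proposal is correct and takes essentially the same route as the paper: the paper organizes the localized momentum computation as two multiplier identities (multiplying \eqref{Model1a} by $\varphi\bar{u}_x$ and its conjugate by $\varphi' u$, plus $\Psi u$ for the second identity) rather than differentiating $\Im\int\varphi u\bar{u}_x\,dx$ directly, but the content, the splitting $\varphi'=1-(1-\varphi')$ combined with the energy evolution \eqref{Denergy1}, and the justification by regularizing the initial data are all identical. One bookkeeping point worth noting: your (correct) substitution $2E(u(s))=2E(u_0)+2\int_{t_0}^s E'(u(\tau))\,d\tau$ yields the coefficient $\tfrac{4\Omega}{3}$ in front of $\int_{t_0}^s a'(2\Omega\tau)\|u(\tau)\|_{L^6}^6\,d\tau$, whereas the lemma's display (and \eqref{varphi01}) shows $\tfrac{2\Omega}{3}$ --- an apparent factor-of-two slip in the paper that is harmless downstream, since this term is simply discarded via the sign assumption $a'>0$ in Lemma \ref{interiorest}.
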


\begin{proof}  In this proof, we will give formal calculations. However, we can always justify these by approximating the initial data with smooth functions and running the multipliers on the corresponding regularized solutions.

Now if we multiply \eqref{Model1a} by $\varphi \bar{u}_x$ and take the real part, we obtain
\begin{equation}\label{Est01}
  \Re \int iu_t\varphi \bar{u}_x dx = -\Re \int u_{xx}\varphi \bar{u}_xdx-\Re \int a(2\Omega t)|u|^4u \varphi \bar{u}_xdx
\end{equation} where

$$\Re \int iu_t\varphi \bar{u}_x dx = - \Im \int \varphi u_t\bar{u}_x dx = -\frac{d}{dt}\Im \int \varphi u\bar{u}_x dx +\Im \int \varphi u\bar{u}_{xt} dx$$
$$=-\frac{d}{dt}\Im \int \varphi u\bar{u}_x dx - \Im \int \varphi' u\bar{u}_{t} dx - \Im \int \varphi u_x\bar{u}_{t} dx.$$  Therefore,
\begin{equation}\label{Est02}\Re \int iu_t\varphi \bar{u}_x dx =  -\frac{1}{2}\frac{d}{dt}\Im \int \varphi u\bar{u}_x dx -\frac{1}{2}\Im \int \varphi' u\bar{u}_{t} dx.\end{equation}

The first term at the right hand side of \eqref{Est01} is
$$-\Re \int u_{xx}\varphi \bar{u}_xdx = \int \varphi'|u_x|^2dx + \Re\int\varphi u_x \bar{u}_{xx}dx,$$ from which it follows that
\begin{equation}\label{Est03}-\Re \int u_{xx}\varphi \bar{u}_xdx = \frac{1}{2} \int \varphi'|u_x|^2dx.\end{equation}

The second term at the right hand side of \eqref{Est01} is
\begin{equation}\label{Est04}-\Re \int a(2\Omega t)|u|^4u \varphi \bar{u}_xdx = \frac{a(2\Omega t)}{6}\int \varphi'|u|^{6} dx.\end{equation}  Combining \eqref{Est01}-\eqref{Est04} we obtain
\begin{equation}\label{Est05}-\frac{d}{dt}\Im \int \varphi u\bar{u}_x dx -\Im \int \varphi' u\bar{u}_{t} dx = \int \varphi'|u_x|^2dx + \frac{2a(2\Omega t)}{6}\int \varphi'|u|^{6} dx.\end{equation}

Multiplying the complex conjugate of \eqref{Model1a} by $\varphi' u$ and taking the real part
$$-\Re \int i\bar{u}_t \varphi' u dx = \Re \int -\bar{u}_{xx}\varphi' u dx - \Re \int a(2\Omega t)|u|^4\bar{u}\varphi' u dx,$$ where the left hand side is equal to \begin{equation}\label{Est06}\Im \int \varphi' u\bar{u}_{t} dx,\end{equation} and the right hand side is equal to
\begin{equation}\label{Est07}-\frac{1}{2}\int\varphi'''|u|^2dx + \int\varphi'|u_x|^2dx - a(2\Omega t)\int\varphi'|u|^{6}dx.\end{equation}

On the other hand, integrating \eqref{Denergy1} over the time interval $(t_0,t)$, we obtain
\begin{multline}\label{Ienergy}
\int_{|x|<1}|u_x|^2dx-\frac{a(2\Omega t)}{3}\int_{|x|<1}|u|^{6}dx \\
= E(u_0) - \int_{|x|\ge 1}|u_x|^2dx+\frac{a(2\Omega t)}{3}\int_{|x|\ge 1}|u|^{6}dx-\frac{2\Omega }{3}\int_{t_0}^ta'(2\Omega s)\|u(s)\|_{L^{6}}^{6}ds.
\end{multline}

Combining \eqref{Est05}-\eqref{Est07}, using \eqref{Ienergy}, and integrating over the time interval $(t_0,t),$ we get the following identity:
\begin{multline}\label{varphi01}
  -\Im \int \varphi u\bar{u}_x dx + \Im \int \varphi u_0\bar{u}_0' dx \\
  = \int_{t_0}^t\left[2\int\varphi'|u_x|^2dx-\frac{2a(2\Omega s)}{3}\int \varphi'|u|^{6}dx-\frac{1}{2}\int\varphi'''|u|^2dx\right]ds\\
  =\int_{t_0}^t\left[2E(u_0)-2\int_{|x|\ge 1}(1-\varphi')|u_x|^2dx+\frac{2a(2\Omega s)}{3}\int_{|x|\ge 1}(1-\varphi')|u|^{6}dx\right]ds\\
  +\int_{t_0}^t\left[-\frac{2\Omega }{3}\int_{t_0}^sa'(2\Omega \tau)\|u(\tau)\|_{L^{6}}^{6}d\tau-\frac{1}{2}\int\varphi'''|u|^2dx\right]ds.
\end{multline}

Multiplying the complex conjugate of \eqref{Model1a} by $\Psi u$, taking the imaginary parts, integrating over $\mathbb{R}\times (0,t)$, we obtain

$$\int \Psi |u|^2 dx = \int \Psi|u_0|^2dx -2\int_{t_0}^t\Im \int \varphi u\bar{u}_xdxds.$$
\end{proof}

Now we will prove the estimate given by the lemma below.

\begin{lem}\label{interiorest} Let $u\in C(t_0,t_0+T; H^1)$ be a solution of \eqref{Model1a}-\eqref{Model1b} and $a'>0$ in $[t_0,t_0+T)$.  Then, for $$\displaystyle \delta=\frac{1}{2}\sqrt{\frac{3}{8A_{\Omega}(t_0+T)}}\left(\|\varphi'''\|_\infty+{\max\left\{\sqrt{3}, \frac{1}{2}\|\varphi''\|_\infty\right\}^2}\right),$$ one has
\begin{equation}\label{varphi2lem}
  -\Im \int \varphi u\bar{u}_x dx\le - \Im \int \varphi u_0\bar{u}_0' dx + (2E(u_0)+\delta) (t-t_0)
\end{equation} on $[t_0,t_0+T)$ provided $\displaystyle \|u(t)\|_{L^2(|x|\ge 1)}^4<\frac{3}{8A_{\Omega}(t_0+T)}$.
\end{lem}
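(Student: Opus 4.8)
The plan is to start from the second virial identity of Lemma \ref{virial}, rearranged as $-\Im\int\varphi u\bar u_x\,dx = -\Im\int\varphi u_0\bar u_0'\,dx + \int_{t_0}^t I(s)\,ds$, where $I(s)$ denotes the last bracketed integrand. Since the target bound is affine in $t-t_0$ with slope $2E(u_0)+\delta$, it suffices to prove the pointwise estimate $I(s)\le 2E(u_0)+\delta$ for each $s\in[t_0,t_0+T)$. First I would discard the two harmless terms. Because $a'\ge 0$ on $[t_0,t_0+T)$, the memory term $-\frac{2\Omega}{3}\int_{t_0}^s a'(2\Omega\tau)\|u(\tau)\|_{L^6}^6\,d\tau$ is nonpositive and may be dropped. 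Since $\varphi(x)=x$ on $|x|\le 1$ forces $\varphi'''\equiv 0$ there, the term $-\frac12\int\varphi'''|u|^2$ is supported in $|x|\ge 1$ and is bounded by $\frac12\|\varphi'''\|_\infty\|u\|_{L^2(|x|\ge1)}^2$, hence by $\frac12\|\varphi'''\|_\infty\,\sigma$ under the hypothesis $\|u\|_{L^2(|x|\ge1)}^2<\sigma:=\sqrt{3/(8A)}$, where $A:=A_\Omega(t_0+T)$. This already accounts for the $\|\varphi'''\|_\infty$ half of $\delta$, so the whole lemma reduces to the single ``focusing-versus-kinetic'' inequality
\[ \frac{2a(2\Omega s)}{3}\int_{|x|\ge1}(1-\varphi')|u|^6\,dx - 2\int_{|x|\ge1}(1-\varphi')|u_x|^2\,dx \le \tfrac12\,\sigma M^2,\qquad M:=\max\{\sqrt3,\tfrac12\|\varphi''\|_\infty\}, \]
where I have used that $1-\varphi'\ge 0$ on $|x|\ge 1$.

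The heart of the argument is a weighted Gagliardo--Nirenberg estimate. Writing $w:=1-\varphi'$, I would first record that $\sqrt w$ is Lipschitz with $\|(\sqrt w)'\|_\infty = M$: on $1<|x|<1+\tfrac{1}{\sqrt3}$ one has $w=3(x-1)^2$, so $(\sqrt w)'=\sqrt3$; for $|x|\ge 1+\tfrac1{\sqrt3}$ one has $w\ge 1$, so $|(\sqrt w)'|=\tfrac{|\varphi''|}{2\sqrt{1-\varphi'}}\le\tfrac12\|\varphi''\|_\infty$. The central claim is that, with $m:=\|u\|_{L^2(|x|\ge1)}$ and $K:=\big(\int_{|x|\ge1}w|u_x|^2\big)^{1/2}$,
\[ \int_{|x|\ge1}w|u|^6\,dx \le m^4\Big(K+\tfrac12 Mm\Big)^2. \]
I would prove this by writing $\int w|u|^6 = \int(\sqrt w|u|^2)^2|u|^2 \le \|\sqrt w|u|^2\|_\infty^2\,m^2$ and then estimating the weighted density by the fundamental theorem of calculus: on each component of $\{|x|\ge 1\}$ the function $\sqrt w|u|^2$ vanishes at $x=\pm1$ and at infinity, so $\|\sqrt w|u|^2\|_\infty\le\tfrac12\int_{|x|\ge1}\big|(\sqrt w|u|^2)'\big|\le\tfrac12\big(Mm^2+2mK\big)$, where the triangle inequality isolates the $(\sqrt w)'$ contribution (bounded by $M$) and Cauchy--Schwarz pairs $|u|$ against $\sqrt w|u_x|$ to reproduce $mK$.

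To finish, I would insert the monotonicity bound $a(2\Omega s)=A_\Omega(s)\le A$ and the weighted estimate into the reduced inequality, obtaining the upper bound $c\big(K+\tfrac12Mm\big)^2-2K^2$ with $c:=\tfrac{2A}{3}m^4$; the smallness hypothesis gives $c<\tfrac{2A}{3}\sigma^2=\tfrac14$. As a downward parabola in $K$, this attains its maximum over $K\ge 0$ at the vertex, with value $\frac{M^2m^2c}{2(2-c)}$. Using $c<\tfrac14$ and $m^2<\sigma$ yields $\frac{M^2m^2c}{2(2-c)}<\frac{M^2\sigma}{14}<\tfrac12\sigma M^2$, which is exactly the slack needed, and assembling the pieces gives $I(s)<2E(u_0)+\tfrac12\sigma(\|\varphi'''\|_\infty+M^2)=2E(u_0)+\delta$.

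The step I expect to be the main obstacle is the weighted Gagliardo--Nirenberg inequality, precisely because $w$ is \emph{degenerate}: it vanishes quadratically at $|x|=1$ and exceeds $1$ for $|x|\ge 1+\tfrac1{\sqrt3}$, so one can neither replace $\sqrt w\,u$ by $u$ nor invoke the flat one-dimensional inequality, and a crude bound would leak an uncontrolled $\|u_x\|_{L^2(|x|\ge1)}$ (unweighted) or an extraneous $\|w\|_\infty$, neither of which appears in $\delta$. The device that circumvents this is to estimate the $L^\infty$ norm of the weighted density $\sqrt w|u|^2$ rather than of $u$ itself: differentiating $\sqrt w|u|^2$ is exactly where the Lipschitz constant $M$ of $\sqrt w$ enters, while the Cauchy--Schwarz step there regenerates the \emph{correctly weighted} localized kinetic energy $K$, so that the degenerate weight never obstructs the estimate.
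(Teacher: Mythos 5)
Your proposal is correct and follows essentially the same route as the paper: both start from the second form of the virial identity in Lemma \ref{virial}, discard the memory term using $a'>0$, bound the $\varphi'''$ term by $\tfrac12\|\varphi'''\|_\infty\|u\|_{L^2(|x|\ge 1)}^2$, and control $\int_{|x|\ge 1}(1-\varphi')|u|^6$ via an $L^\infty$ bound on $(1-\varphi')^{1/2}|u|^2$ obtained by differentiating it, with the constant $C_0=\max\{\sqrt{3},\tfrac12\|\varphi''\|_\infty\}$ (your $M$) arising exactly as the Lipschitz bound for $(1-\varphi')^{1/2}$ and the smallness hypothesis $\|u\|_{L^2(|x|\ge1)}^4<\tfrac{3}{8A_\Omega(t_0+T)}$ used to let the weighted kinetic term absorb the sextic one. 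Your only deviations are quantitative refinements — the factor $\tfrac12$ from two-sided vanishing of $\sqrt{w}\,|u|^2$ and the explicit vertex optimization in $K$, where the paper instead applies $(a+b)^2\le 2a^2+2b^2$ and drops the resulting nonpositive kinetic contribution — yielding slightly sharper intermediate constants but the same $\delta$.
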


\begin{proof}

For $x\ge 1$ we have

\begin{multline}\label{xge1eps}
(1-\varphi(x)')^{\frac{1}{2}}|u(x)|^2 = -\int_x^\infty \left[(1-\varphi(y)')^{\frac{1}{2}}|u(y)|^2\right]_ydy \\
\le \frac{1}{2}\int_{y\ge 1}(1-\varphi')^{-\frac{1}{2}}|\varphi''||u|^2dy+2\int_{y\ge 1} (1-\varphi')^{\frac{1}{2}}|u||u_x|dy\\
\le C_0\|u\|_{L^2(|x|\ge 1)}^2+2\|u\|_{L^2(|x|\ge 1)}\|(1-\varphi')^{\frac{1}{2}}u_x\|_{L^2(|x|\ge 1)}
\end{multline} where $\displaystyle C_0=\max\{\sqrt{3}, \frac{1}{2}\|\varphi''\|_\infty\}$. (A similar estimate also holds for $x\le -1$; the details are omitted.)  Hence, one has

\begin{equation}\label{linf01}\|(1-\varphi')|u|^4\|_{L^\infty(|x|\ge 1)}\le 2C_0^2\|u\|_{L^2(|x|\ge 1)}^4+8\|u\|_{L^2(|x|\ge 1)}^2\|(1-\varphi')^{\frac{1}{2}}u_x\|_{L^2(x\ge 1)}^2.\end{equation}

Using the assumption that $a'>0$ in $[t_0,t_0+T)$ and \eqref{linf01} in \eqref{varphi01}, we get the estimate
\begin{multline}\label{varphi2}
  -\Im \int \varphi u\bar{u}_x dx + \Im \int \varphi u_0\bar{u}_0' dx \\
  \le \int_{t_0}^t\left[2E(u_0)+C_1\|u(s)\|_{L^2(|x|\ge 1)}^2+C_2\|u(s)\|_{L^2(|x|\ge 1)}^6\right]ds\\
  -2\int_{t_0}^t\left(1-\frac{8C_3}{3}\|u(s)\|_{L^2(|x|\ge 1)}^4\right)\int_{|x|\ge 1}(1-\varphi')|u_x|^2dxds
\end{multline} where $\displaystyle C_1=\frac{1}{2}\|\varphi'''\|_\infty$, $\displaystyle C_2=\frac{4}{3}C_0^2A_\Omega(t_0+T)$, and $\displaystyle C_3=A_\Omega(t_0+T)$.  Therefore, the result follows with $\displaystyle \delta=\sqrt{\frac{3}{8C_3}}\left(C_1+\frac{C_0^2}{2}\right).$
\end{proof}

\subsection{Proof of collapse for the real-line problem}\label{Theorem1Details}

\subsubsection*{Step 1.} We first prove the result by assuming that the condition $$\displaystyle \|u(t)\|_{L^2(|x|\ge 1)}^4<\frac{3}{8A_{\Omega}(t_0+T)}$$ is satisfied on $[0,T]$.  The general case can always be reduced to this situation by using the given assumptions on $u_0$ in Theorem \ref{mainresult-wl}, see Lemma \ref{contra} below.

Now, assume to the contrary that the solution $u(t)$ of \eqref{Model1a}-\eqref{Model1b} exists for all $t\in [t_0,t_0+T)$.  Using Lemmas \ref{virial} and \ref{interiorest}, we get
\begin{equation}\label{Bestest}\int \Psi |u|^2 dx \le \int \Psi|u_0|^2dx -2(t-t_0)\Im\int \varphi u_0\bar{u}_0' dx +(2E(u_0)+\delta)(t-t_0)^2\end{equation} for $t\in [t_0,t_0+T)$.

In order to shorten the notation, we adapt to the notation given at the beginning of the paper and we also set $\tau = t-t_0$.
By \eqref{Bestest}, we have
\begin{equation}
  0\le \int \Psi |u|^2 dx\le \alpha+\lambda\tau+\beta\tau^2 \equiv p(\tau), \tau\in[0,T).
\end{equation}

Let us analyze different cases for the quadratic polynomial $p(\tau)$ depending on the signs of $\lambda$ and $\beta$.

\subsubsection{Case: $\beta> 0$ (satisfied if $E(u_0)> -\delta/2$)}  In this case, $p(\tau)$ can only be negative if $\lambda<0$, $\lambda^2-4\alpha\beta>0$ and as soon as $\tau>\frac{-\lambda- \sqrt{\lambda^2-4\alpha\beta}}{2\beta}.$  This yields a contradiction if we choose $u_0$ in such a way that $T>\frac{-\lambda- \sqrt{\lambda^2-4\alpha\beta}}{2\beta}$, because then there will be a time $\tau\in (0,T)$ such that $0\le \int\Psi |u|^2 dx<0$, which is absurd.

\subsubsection{Case: $\beta<0$ (satisfied if $E(u_0)<-\delta/2$)} In this case $p(\tau)$ is negative as soon as $\frac{-\lambda+\sqrt{\lambda^2-4\alpha\beta}}{2\beta}<\tau.$  Again, this will contradict our assumption.

\subsubsection{Case: $\beta=0$ (satisfied if $E(u_0)=-\delta/2$)} In this case, $p(\tau)$ can only be negative if $\lambda<0$ and as soon as $\tau>-\alpha/\lambda.$

\subsubsection*{Step 2.} Now, we deal with the general situation and show that it can always be reduced to the case of Step 1 under the given assumptions on $u_0.$  To this end, it is enough to prove the following.

\begin{lem}\label{contra} If $u_0$ satisfies the conditions of Theorem \ref{mainresult-wl}, then there exists $T'>\theta_{-}$ if $\beta>0$, $T'>\theta_{+}$ if $\beta<0$, and $T'>\theta_{0}$ if $\beta=0$ such that the corresponding solution satisfies $$\displaystyle \|u(t)\|_{L^2(|x|\ge 1)}<\left(\frac{3}{8A_\Omega(t_0+T)}\right)^\frac{1}{4}$$ on $[t_0,t_0+T']$.
\end{lem}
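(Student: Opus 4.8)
The plan is to run a continuity (bootstrap) argument that breaks the apparent circularity in Lemma~\ref{interiorest}: that estimate is available only on a time interval where the smallness condition $\|u(t)\|_{L^2(|x|\ge 1)}^4<\frac{3}{8A_\Omega(t_0+T)}$ already holds, so it cannot be invoked on all of $[t_0,t_0+T]$ at the outset. Write $M:=\left(\frac{3}{8A_\Omega(t_0+T)}\right)^{1/4}$ and $\tau=t-t_0$, so the target bound is $\|u(t)\|_{L^2(|x|\ge1)}<M$. The first ingredient is a pointwise lower bound on the weight. Since $\varphi$ is odd with $\varphi\ge 0$ on $[0,\infty)$ (it equals $x$ on $[0,1]$, stays positive on $[1,1+\tfrac{1}{\sqrt3}]$, and decreases to $0$ with $\varphi'\le 0$ thereafter), the primitive $\Psi$ is even, nondecreasing on $[0,\infty)$, and satisfies $\Psi(x)\ge\Psi(1)=\tfrac12$ for $|x|\ge 1$. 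Consequently
\[
\|u(t)\|_{L^2(|x|\ge 1)}^2\le 2\int_{|x|\ge 1}\Psi|u|^2\,dx\le 2\int_G\Psi|u|^2\,dx .
\]
On any subinterval $[t_0,t_0+s]$ where the smallness condition holds, Lemmas~\ref{virial} and~\ref{interiorest} yield the quadratic bound \eqref{Bestest}, i.e. $\int_G\Psi|u|^2\,dx\le p(\tau)=\alpha+\lambda\tau+\beta\tau^2$, and therefore $\|u(t)\|_{L^2(|x|\ge 1)}^2\le 2\,p(\tau)$.

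Next I would set up the bootstrap. Let $T_0$ be the local existence time and define
\[
T':=\sup\Big\{\,s\in(0,T_0):\ \|u(t)\|_{L^2(|x|\ge 1)}<M\ \text{on}\ [t_0,t_0+s)\,\Big\}.
\]
Evaluating the weight bound at $t=t_0$ gives $\|u_0\|_{L^2(|x|\ge 1)}^2\le 2\alpha$, and the hypothesis on $\alpha$ in each case forces the strict inequality at the initial time, so $T'>0$. Suppose, for contradiction, that $T'$ does not exceed the root identified in Step~1 (namely $\theta_-$ if $\beta>0$, $\theta_+$ if $\beta<0$, and $\theta_0$ if $\beta=0$). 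Then the smallness condition holds on $[t_0,t_0+T')$, so the quadratic bound above is valid there, and I claim that $2p(\tau)<M^2$ throughout $[0,T']$. Continuity of $t\mapsto\|u(t)\|_{L^2(|x|\ge1)}$ would then force saturation $\|u(t_0+T')\|_{L^2(|x|\ge1)}=M$, while the quadratic bound gives $\|u(t_0+T')\|_{L^2(|x|\ge1)}^2\le 2p(T')<M^2$ — a contradiction. Hence $T'$ exceeds the relevant root, which is exactly the assertion of the lemma.

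It remains to verify the claim $2p(\tau)<M^2$ on the relevant interval, which is where the case-dependent hypotheses enter; note that $\tfrac12\left(\tfrac{3}{8A_\Omega(t_0+T)}\right)^{1/2}=M^2/2$, so the stated conditions are precisely bounds of the form $p\le M^2/2$. When $\beta>0$ (case (i)), $\lambda<0$ gives $\lambda\tau\le 0$, hence $p(\tau)\le\alpha+\beta\tau^2$, which increases on $[0,\theta_-]$; thus $p(\tau)\le\alpha+\beta\theta_-^2<M^2/2$ by hypothesis. When $\beta=0$ (case (iii)), $p(\tau)=\alpha+\lambda\tau\le\alpha<M^2/2$ on $[0,\theta_0]$ since $\lambda<0$. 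When $\beta<0$ (case (ii)), $p$ is a downward parabola and I must control its maximum over the relevant interval; estimating $\lambda=-2\,\Im\int_G\varphi u_0\bar u_0'\,dx$ by Cauchy–Schwarz in terms of $\|u_0'\|_{L^2}$ produces the factor $\gamma$, giving $p(\tau)\le\alpha\gamma<M^2/2$ from the hypothesis $2\alpha\gamma<M^2$.

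The main obstacle is conceptual rather than computational: the estimate of Lemma~\ref{interiorest} is self-referential — it holds only where the conclusion we want already holds — so the whole point is to close the loop by continuity and to propagate the strict inequality up to (and slightly past) the root of $p$ supplied by Step~1. The only genuinely delicate case is $\beta<0$, where $p$ need not be monotone on the relevant interval and one must bound its maximum; this is what forces the a priori control of $\lambda$ (equivalently of the initial momentum-type density $\Im\int_G\varphi u_0\bar u_0'\,dx$) by $\|u_0'\|_{L^2}$ and is the reason the factor $\gamma$ appears in hypothesis (ii). The remaining cases reduce to the elementary monotonicity estimates for $p$ indicated above.
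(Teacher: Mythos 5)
Your proposal is correct and takes essentially the same route as the paper's proof: the pointwise bound $\Psi(x)\ge\Psi(1)=\tfrac12$ for $|x|\ge 1$ giving $\|u(t)\|_{L^2(|x|\ge 1)}^2\le 2\int_G\Psi|u|^2\,dx$, the definition of $T'$ as the supremum time on which the smallness condition holds, and a continuity/saturation contradiction against the case-by-case bound $2p(\tau)<\left(\frac{3}{8A_\Omega(t_0+T)}\right)^{1/2}$, exactly as in the text. The only difference is that you sketch the $\beta<0$ case explicitly (bounding $\lambda$ via Cauchy--Schwarz to produce the factor $\gamma$, which also requires the elementary inequality $\varphi^2\lesssim\Psi$ to land on $\alpha$ rather than $\|u_0\|_{L^2}^2$), whereas the paper omits that case with a reference to \cite{OT}.
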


\begin{proof} First consider the case $\beta>0$, i.e., the assumptions in part (i) of Theorem \ref{mainresult-wl}.  Note that if $$\displaystyle \alpha+\beta\theta_{-}^2<\frac{1}{2}\left(\frac{3}{8A_\Omega(t_0+T)}\right)^{\frac{1}{2}},$$ then
$$\|u_0\|_{L^2(|x|\ge 1)}^2\le 2\int\Psi|u_0|^2dx<\left(\frac{3}{8A_\Omega(t_0+T)}\right)^{\frac{1}{2}}.$$  Now, we set $$T'=\sup\left\{\tau\,|\,\tau\ge 0, \|u(t_0+\tau)\|_{L^2(|x|\ge 1)}\le \left(\frac{3}{8A_\Omega(t_0+T)}\right)^{\frac{1}{4}}\right\}.$$  Clearly $T'>0$ by continuity.  Now, if $T'\le \theta_{-}$, then $$\|u(t_0+T')\|_{L^2(|x|\ge 1)}=\left(\frac{3}{8A_\Omega(t_0+T)}\right)^{\frac{1}{2}}.$$ On the other hand, we know that for $\tau\in [0,T']$,
$$\|u(t_0+\tau )\|_{L^2(|x|\ge 1)}^2\le 2\int\Psi|u|^2dx\le 2(\alpha+\beta\tau^2)\le 2(\alpha+\beta\theta_{-}^2)<\left(\frac{3}{8A_\Omega(t_0+T)}\right)^{\frac{1}{2}}.$$ Hence, we arrive at a contradiction, and it must be true that $T'>\theta_{-}$.  Therefore, $$\|u(t_0+\tau)\|_{L^2(|x|\ge 1)}^2\le \left(\frac{3}{8A_\Omega(t_0+T)}\right)^{\frac{1}{2}}$$ for all $\tau\in [0,T'].$

The proof of the Lemma for the case $\beta=0$ is similar.  The case $\beta<0$ for which $u_0$ satisfies the assumptions in part (ii) of Theorem \ref{mainresult-wl} can be done as in \cite{OT}, and therefore omitted here.
\end{proof}
\section{Boundary Oscillations}

\subsection{Local Well-Posedness} Local well-posedness of $H^1$ solutions (under the assumption $u_0\in H^3(\mathbb{R}_+)$) was obtained by \cite{AASKD} in the case $A_\Omega$ is constant.    In \cite{AASKD} the initial data was assumed to be too smooth compared to the regularity of the solutions obtained.  It is well-known from the theory of the linear Schrödinger equation that solutions are of the same class as the initial data.  Recently, we improved this well-posedness result (see \cite{BO}) where the main equation also included a nonlinear source term of the form $k|u|^pu$. By using the fact that $A_\Omega$ is smooth , the lemma below follows as a corollary to \cite[Theorem 1.1]{BO} at this point.

\begin{lem}[Local well-posedness]\label{LocalWellP} Let $T>0$ be arbitrary, $s\in \left(\frac{1}{2},\frac{7}{2}\right) -\left\{\frac{3}{2}\right\}$,  $r>0$, $u_0\in H^s(\mathbb{R_+})$, and ${u_0'(0)=-A_\Omega(t_0)|u_0(0)|^ru_0(0)}$ whenever $s>\frac{3}{2}$.  We also assume that $r>\frac{2s-1}{4}$ if $r$ is an odd integer and $[r]\ge \left[\frac{2s-1}{4}\right]$ if $r$ is non-integer.  Then, the following hold true.
\begin{itemize}
  \item[(i)] Local Existence and Uniqueness: There exists a unique local solution $u\in X_{T_0}^s$ of \eqref{Model2a}-\eqref{Model2c} for some $T_0=T_0\left(\|u_0\|_{H^s(\mathbb{R}_+)}\right)\in (0,T]$, where $X_{T_0}^s$ is the set of those elements in $$C([t_0,t_0+T_0];H^s(\mathbb{R}_+))\cap C(\mathbb{R}_+^x;H^{\frac{2s+1}{4}}(t_0,t_0+T_0))$$ that are bounded with respect to the norm ${\|\cdot\|_{X_{T_0}^s}}$.   This norm is defined by $$\|u\|_{X_{T_0}^s}:=\sup_{t\in[t_0,t_0+{T_0}]}\|u(\cdot,t)\|_{H^s(\mathbb{R_+})}+\sup_{x\in\mathbb{R}_+}\|u(x,\cdot)\|_{H^{\frac{2s+1}{4}}(t_0,t_0+{T_0})}.$$
  \item[(ii)] Continuous Dependence: If $B$ is a bounded subset of $H^s(\mathbb{R}_+)$, then there is $T_0>0$ (depends on the diameter of $B$) such that the flow $u_0\rightarrow u$ is Lipschitz continuous from $B$ into $X_{T_0}^s$.
  \item[(iii)] Blow-up Alternative: If $S$ is the set of all $T_0\in (0,T]$ such that there exists a unique local solution in $X_{T_0}^s$, then whenever $\displaystyle T_{max}:=\sup_{T_0\in S}T_0<T$, it must be true that ${\displaystyle\lim_{t\uparrow t_0+T_{max}}\|u(t)\|_{H^s(\mathbb{R}_+)}=\infty}$.
\end{itemize}
 \end{lem}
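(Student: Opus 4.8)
The plan is to deduce this lemma directly from \cite[Theorem 1.1]{BO}, which establishes precisely this type of local well-posedness for the half-line Schr\"odinger equation carrying a power-type boundary nonlinearity together with an interior source $k|u|^pu$. The problem \eqref{Model2a}-\eqref{Model2c} is the special case obtained by setting the interior coefficient $k=0$, so the only genuinely new feature to address is that the boundary coefficient $A_\Omega(t)=a(2\Omega t)$ now depends on time rather than being constant. Accordingly, the whole argument reduces to checking that the contraction-mapping scheme of \cite{BO} survives the replacement of the constant boundary coefficient by the smooth, time-dependent factor $A_\Omega(t)$, after which parts (i)--(iii) are inherited verbatim.

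First I would recall the Duhamel/boundary-integral formulation underlying \cite{BO}: one writes $u$ as the sum of the free half-line evolution of $u_0$ and a boundary operator applied to the Neumann datum $g(t)=-A_\Omega(t)|u(t,0)|^ru(t,0)$, and one seeks a fixed point of the resulting map $\Phi$ in the space $X_{T_0}^s$. The linear mapping properties of the free propagator and of the boundary operator on $X_{T_0}^s$ are exactly those proved in \cite{BO} and are untouched by the nonlinearity; the heart of the matter is the nonlinear boundary estimate, namely that $u\mapsto A_\Omega(t)|u|^ru$ maps the trace space $H^{\frac{2s+1}{4}}(t_0,t_0+T_0)$ into itself, is Lipschitz on bounded sets, and has a Lipschitz constant that can be made small by shrinking $T_0$, so as to close the contraction. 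The hypotheses relating $r$ and $s$, namely $r>\frac{2s-1}{4}$ for odd integer $r$ and $[r]\ge[\frac{2s-1}{4}]$ for non-integer $r$, are exactly what guarantee that the Nemytskii map $z\mapsto|z|^rz$ has enough regularity for these trace-space estimates, while the compatibility condition $u_0'(0)=-A_\Omega(t_0)|u_0(0)|^ru_0(0)$ for $s>\frac32$ is the standard boundary compatibility requirement carried over from \cite{BO}.

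The main obstacle, and the only point at which the time-dependence enters, is this nonlinear boundary estimate. Here I would use that $a$ is smooth, so $A_\Omega$ and its derivatives are bounded on the compact interval $[t_0,t_0+T]$; since $s>\frac12$ forces the trace exponent $\frac{2s+1}{4}>\frac12$, the space $H^{\frac{2s+1}{4}}(t_0,t_0+T_0)$ is a Banach algebra and multiplication by the smooth factor $A_\Omega(t)$ is bounded on it. Writing $A_\Omega(t)|u|^ru=A_\Omega(t)\cdot(|u|^ru)$ and combining this multiplier bound with the trace estimate for $|u|^ru$ already established in \cite{BO}, one recovers the required boundary estimate with a constant enlarged only by a factor depending on $a$, while the small-$T_0$ gain and the Lipschitz property are inherited unchanged. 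With the contraction secured, the three conclusions follow as in \cite{BO}: existence and uniqueness from the fixed point in $X_{T_0}^s$, continuous dependence from the Lipschitz dependence of $\Phi$ on the data, and the blow-up alternative from the standard continuation argument on the maximal existence time $T_{max}$.
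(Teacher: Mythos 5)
Your proposal is correct and takes essentially the same route as the paper: the paper's entire proof is the remark that, since $A_\Omega$ is smooth, the lemma follows as a corollary of \cite[Theorem 1.1]{BO} (the constant-coefficient result with interior source $k|u|^pu$, here taken with $k=0$). Your additional detail---checking that multiplication by the bounded smooth factor $A_\Omega(t)$ preserves the trace-space estimates in $H^{\frac{2s+1}{4}}(t_0,t_0+T_0)$ so the contraction closes---is exactly the verification the paper leaves implicit.
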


\subsection{Virial Identities}
We have the following virial identities.
\begin{lem}\label{virialB}Let $u\in C(t_0,t_0+T_0; H^1)$ be a solution of \eqref{Model2a}-\eqref{Model2c}.  Then,
\begin{multline}
    -\Im \int_0^\infty \varphi u\bar{u}_x dx + \Im \int_0^\infty \varphi u_0\bar{u}_0' dx \\
  = -a(2\Omega t)|u(t,0)|^{r+2}-\frac{1}{2}\int_0^\infty\varphi'''|u|^2dx + 2\int_0^\infty\varphi'|u_x|^2dx
\end{multline}

and

$$\int_0^\infty \Psi |u|^2 dx = \int_0^\infty \Psi|u_0|^2dx -2\int_{t_0}^t\Im \int_0^\infty \varphi u\bar{u}_xdxds$$ for $t\in [t_0,t_0+T_0)$.
\end{lem}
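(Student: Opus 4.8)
The plan is to mimic the two-multiplier computation of Lemma~\ref{virial}, now carried out on the half-line $(0,\infty)$, where the crucial structural difference is that \eqref{Model2a} carries no interior nonlinearity: the term $-a(2\Omega t)|u(t,0)|^{r+2}$ must be produced \emph{entirely} from a boundary contribution at $x=0$. As in the proof of Lemma~\ref{virial}, I would first perform the computation formally and then justify it by approximating $u_0$ by smooth compatible data, running the multipliers on the regularized solutions supplied by Lemma~\ref{LocalWellP}, and passing to the limit; the rapid decay of $\varphi=(x^2+x)e^{-x}$ ensures that all integrals and the boundary terms at $x=+\infty$ are well defined for $H^1$ data. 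The whole computation hinges on three elementary facts about this weight, namely $\varphi(0)=0$, $\varphi'(0)=1$, and $\varphi''(0)=0$, which determine exactly which boundary terms survive the integrations by parts.

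For the first identity I would use two multipliers. Multiplying \eqref{Model2a} by $\varphi\bar u_x$ and taking real parts, the manipulations of \eqref{Est01}--\eqref{Est05} give, after integration by parts in $x$ and $t$, the relation $-\tfrac12\tfrac{d}{dt}\Im\int_0^\infty\varphi u\bar u_x\,dx-\tfrac12\Im\int_0^\infty\varphi'u\bar u_t\,dx=\tfrac12\int_0^\infty\varphi'|u_x|^2\,dx$; here every boundary term at $x=0$ carries a factor $\varphi(0)=0$ and drops out, and there is no interior nonlinear term. Multiplying the complex conjugate of \eqref{Model2a} by $\varphi'u$ and taking real parts produces $\Im\int_0^\infty\varphi'u\bar u_t\,dx=\int_0^\infty\varphi'|u_x|^2\,dx-\tfrac12\int_0^\infty\varphi'''|u|^2\,dx+\Re\bigl(\varphi'(0)u(t,0)\bar u_x(t,0)\bigr)-\tfrac12\varphi''(0)|u(t,0)|^2$. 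The term with $\varphi''(0)=0$ vanishes, while the surviving boundary term, using $\varphi'(0)=1$ together with the nonlinear boundary condition \eqref{Model2b} in the form $\bar u_x(t,0)=-a(2\Omega t)|u(t,0)|^r\bar u(t,0)$, equals $-a(2\Omega t)|u(t,0)|^{r+2}$. Substituting into the first relation and cancelling the $\int_0^\infty\varphi'|u_x|^2\,dx$ terms yields the pointwise-in-time identity $-\tfrac{d}{dt}\Im\int_0^\infty\varphi u\bar u_x\,dx=2\int_0^\infty\varphi'|u_x|^2\,dx-a(2\Omega t)|u(t,0)|^{r+2}-\tfrac12\int_0^\infty\varphi'''|u|^2\,dx$, and integrating over $(t_0,t)$ gives the stated identity (whose right-hand side is to be read under $\int_{t_0}^t(\cdots)\,ds$, exactly as in Lemma~\ref{virial}).

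The second identity is more direct. Writing $\Psi'=\varphi$ and using $\Psi(0)=0$, I would multiply the complex conjugate of \eqref{Model2a} by $\Psi u$, take imaginary parts, and integrate by parts twice in $x$; the boundary term at $x=0$ again vanishes because $\Psi(0)=0$, and the term $\Im\int_0^\infty\Psi|u_x|^2\,dx$ vanishes since its integrand is real. This leaves $\tfrac{d}{dt}\int_0^\infty\Psi|u|^2\,dx=-2\Im\int_0^\infty\varphi u\bar u_x\,dx$, which integrates directly to the claimed formula.

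I expect the main obstacle to be the careful bookkeeping of the boundary terms at $x=0$: one must verify precisely which contributions are annihilated by $\varphi(0)=0$ and $\varphi''(0)=0$ and which survive through $\varphi'(0)=1$, and then insert \eqref{Model2b} correctly to convert the surviving term into $-a(2\Omega t)|u(t,0)|^{r+2}$ (this is exactly why the particular weight \eqref{varphi2def} is chosen). A secondary technical point is the rigorous justification of these formal multiplier identities for merely $H^1$ solutions, which requires the regularization/approximation argument together with the decay afforded by $\varphi$.
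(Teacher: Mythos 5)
Your proposal is correct and follows essentially the same route as the paper's proof: the same two multipliers $\varphi\bar{u}_x$ and $\varphi' u$ (the latter applied to the conjugate equation), the same boundary facts $\varphi(0)=0$, $\varphi'(0)=1$, $\varphi''(0)=0$ with the boundary condition \eqref{Model2b} converting the surviving term into $-a(2\Omega t)|u(t,0)|^{r+2}$, and the multiplier $\Psi u$ for the second identity. Your reading of the first identity with the right-hand side under $\int_{t_0}^t(\cdots)\,ds$ is also exactly what the paper's own displayed identity \eqref{varphi01B} states.
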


\begin{proof}

Now if we multiply \eqref{Model2a} by $\varphi \bar{u}_x$ and take the real part, we obtain
\begin{equation}\label{Est01B}
  \Re \int_0^\infty iu_t\varphi \bar{u}_x dx = -\Re \int_0^\infty u_{xx}\varphi \bar{u}_xdx
\end{equation} where

$$\Re \int_0^\infty iu_t\varphi \bar{u}_x dx = - \Im \int_0^\infty \varphi u_t\bar{u}_x dx = -\frac{d}{dt}\Im \int_0^\infty \varphi u\bar{u}_x dx +\Im \int_0^\infty \varphi u\bar{u}_{xt} dx$$
$$=-\frac{d}{dt}\Im \int_0^\infty \varphi u\bar{u}_x dx - \Im \int_0^\infty \varphi' u\bar{u}_{t} dx - \Im \int_0^\infty \varphi u_x\bar{u}_{t} dx.$$

Therefore,
\begin{equation}\label{Est02B}\Re \int_0^\infty iu_t\varphi \bar{u}_x dx =  -\frac{1}{2}\frac{d}{dt}\Im \int_0^\infty \varphi u\bar{u}_x dx -\frac{1}{2}\Im \int_0^\infty \varphi' u\bar{u}_{t} dx.\end{equation}

Note that in the above calculation boundary terms vanished since $\varphi(0)=0$ and $\varphi$ vanishes at infinity.

The term at the right hand side of \eqref{Est01B} is
$$-\Re \int_0^\infty u_{xx}\varphi \bar{u}_xdx = \int_0^\infty \varphi'|u_x|^2dx + \Re\int_0^\infty\varphi u_x \bar{u}_{xx}dx,$$ from which it follows that
\begin{equation}\label{Est03B}-\Re \int_0^\infty u_{xx}\varphi \bar{u}_xdx = \frac{1}{2} \int_0^\infty \varphi'|u_x|^2dx.\end{equation}

Combining \eqref{Est01B}-\eqref{Est03B} we obtain
\begin{equation}\label{Est05B}-\frac{d}{dt}\Im \int_0^\infty \varphi u\bar{u}_x dx -\Im \int_0^\infty \varphi' u\bar{u}_{t} dx = \int_0^\infty \varphi'|u_x|^2dx.\end{equation}

Multiplying the complex conjugate of \eqref{Model2a} by $\varphi' u$ and taking the real part,
$$-\Re \int_0^\infty i\bar{u}_t \varphi' u = \Re \int_0^\infty -\bar{u}_{xx}\varphi' u dx,$$ where the left hand side is equal to \begin{equation}\label{Est06}\Im \int_0^\infty \varphi' u\bar{u}_{t} dx\end{equation} and the right hand side is equal to
\begin{equation}\label{Est07B}-a(2\Omega t)|u(t,0)|^{r+2}-\frac{1}{2}\int_0^\infty\varphi'''|u|^2dx + \int_0^\infty\varphi'|u_x|^2dx .\end{equation}

Note that in the above calculation, we use the facts that $\varphi'(0)=1, \varphi''(0)=0$ and $\varphi$ vanishing at infinity in order to treat the boundary terms.

Combining \eqref{Est05B}-\eqref{Est07B} and integrating over the time interval $(t_0,t)$, we get the following identity:
\begin{multline}\label{varphi01B}
  -\Im \int_0^\infty \varphi u\bar{u}_x dx + \Im \int_0^\infty \varphi u_0\bar{u}_0' dx \\
  = \int_{t_0}^t\left(-a(2\Omega s)|u(s,0)|^{r+2}-\frac{1}{2}\int_0^\infty\varphi'''|u|^2dx + 2\int_0^\infty\varphi'|u_x|^2dx\right)ds.
\end{multline}

Multiplying the complex conjugate of \eqref{Model2a} by $\Psi u$, taking the imaginary parts, and integrating over $\mathbb{R}\times (t_0,t)$, we obtain

$$\int_0^\infty \Psi |u|^2 dx = \int_0^\infty \Psi|u_0|^2dx -2\int_{t_0}^t\Im \int_0^\infty \varphi u\bar{u}_xdxds.$$
\end{proof}

\subsection{Estimates}
The corresponding  estimate for the half-line problem is below.

\begin{lem}\label{boundaryest} Let $u\in C(t_0,t_0+T_0; H^1)$ be a solution of \eqref{Model2a}-\eqref{Model2c}. Then, for $\displaystyle \delta=\frac{1}{2}\|\varphi'''\|_\infty\|u_0\|_2^2$, one has
\begin{equation}\label{varphi2lemB}
  -\Im \int_0^\infty \varphi u\bar{u}_x dx\le - \Im \int_0^\infty \varphi u_0\bar{u}_0' dx + (2E(u_0)+\delta)(t-t_0)
\end{equation} for $t\in [t_0,t_0+T_0)$.
\end{lem}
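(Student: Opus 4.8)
The plan is to start from the virial identity of Lemma \ref{virialB} and bound its right-hand side integrand pointwise in $s$ by $2E(u_0)+\delta$; integrating over $[t_0,t]$ then yields \eqref{varphi2lemB} at once. So everything reduces to showing that for each $s\in[t_0,t)$,
\[
2\int_0^\infty\varphi'|u_x|^2\,dx-a(2\Omega s)|u(s,0)|^{r+2}-\frac12\int_0^\infty\varphi'''|u|^2\,dx\le 2E(u_0)+\delta.
\]

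The first ingredient is a pointwise property of the weight $\varphi=(x^2+x)e^{-x}$: a direct computation gives $\varphi'(x)=(1+x-x^2)e^{-x}$ and $\varphi''(x)=(x^2-3x)e^{-x}=x(x-3)e^{-x}$, so $\varphi'$ decreases on $(0,3)$ and increases on $(3,\infty)$ with $\varphi'(0)=1$ and $\varphi'(x)\to 0^-$ as $x\to\infty$; hence $\varphi'(x)\le 1$, i.e. $1-\varphi'\ge 0$, on all of $[0,\infty)$. Since $|u_x|^2\ge 0$ this lets me discard the kinetic term, $2\int_0^\infty\varphi'|u_x|^2\,dx\le 2\|u_x(s)\|_{L^2(\mathbb{R}_+)}^2$. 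For the third term I use conservation of mass for the half-line problem, which — as for \eqref{density} — follows by multiplying \eqref{Model2a} by $\bar u$, integrating, and observing that the boundary contribution $\Im\big(\bar u(s,0)u_x(s,0)\big)$ vanishes because \eqref{Model2b} makes $\bar u(s,0)u_x(s,0)=-A_\Omega(s)|u(s,0)|^{r+2}$ real; thus $-\frac12\int_0^\infty\varphi'''|u|^2\,dx\le\frac12\|\varphi'''\|_\infty\|u(s)\|_{L^2(\mathbb{R}_+)}^2=\frac12\|\varphi'''\|_\infty\|u_0\|_{L^2(\mathbb{R}_+)}^2=\delta$.

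Next I rewrite what remains using the energy \eqref{energy1}. Writing $2E(u(s))=2\|u_x(s)\|_{L^2(\mathbb{R}_+)}^2-\frac{4a(2\Omega s)}{r+2}|u(s,0)|^{r+2}$ and substituting, the integrand is bounded by
\[
2E(u(s))+a(2\Omega s)\,\frac{2-r}{r+2}\,|u(s,0)|^{r+2}+\delta.
\]
Here the hypotheses of Theorem \ref{mainresult-hl} enter: since $A_\Omega(t_0)>0$ and $A_\Omega'\ge 0$ on $[t_0,t_0+T)$ we have $a(2\Omega s)=A_\Omega(s)\ge A_\Omega(t_0)>0$, while $r\ge 2$ forces $\frac{2-r}{r+2}\le 0$, so the middle term is nonpositive and drops out. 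Finally, the same monotonicity gives $a'\ge 0$, so \eqref{Denergy2} shows $E$ is nonincreasing and $E(u(s))\le E(u_0)$. Combining these bounds gives the desired pointwise estimate $2E(u(s))+\delta\le 2E(u_0)+\delta$, and integrating over $[t_0,t]$ finishes the proof.

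I expect the only genuinely delicate point to be the verification that $\varphi'\le 1$ on $[0,\infty)$, since it is precisely this inequality that lets the kinetic term $2\int\varphi'|u_x|^2$ be controlled by $2\|u_x\|^2$ and hence absorbed into the energy; the rest is bookkeeping with the energy identity and mass conservation, using only $r\ge 2$ and the positivity and monotonicity of $A_\Omega$ already assumed in the theorem. It is worth noting that, in contrast to the interior estimate of Lemma \ref{interiorest}, no smallness hypothesis on $\|u\|_{L^2(|x|\ge 1)}$ is needed here, exactly because $1-\varphi'\ge 0$ everywhere permits dropping the kinetic term outright rather than estimating it.
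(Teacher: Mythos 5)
Your proof is correct and follows essentially the same route as the paper's: both start from the virial identity of Lemma \ref{virialB}, use $\varphi'\le 1$ together with the energy \eqref{energy1} to rewrite the kinetic term and produce the factor $\frac{2-r}{2+r}\,a(2\Omega s)|u(s,0)|^{r+2}$, bound the $\varphi'''$ term via conservation of mass, and discard the middle term using $r\ge 2$ and the positivity of $a$. You merely make explicit several steps the paper leaves implicit --- the computation verifying $\varphi'\le 1$ for $\varphi=(x^2+x)e^{-x}$, the vanishing of the boundary term in the mass identity via \eqref{Model2b}, and the use of $a'\ge 0$ with \eqref{Denergy2} to pass from $E(u(s))$ to $E(u_0)$.
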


\begin{proof} First of all we observe that $|\varphi'(x)|\le 1$ and $|\varphi'''(x)|\le 2C_1$ where $C_1=\frac{1}{2}\|\varphi'''\|_\infty$ is a positive constant.  Using this observation, the definition of energy and the conservation of density in \eqref{varphi01B}, we obtain

\begin{multline}-\Im \int_0^\infty \varphi u\bar{u}_x dx + \Im \int_0^\infty \varphi u_0\bar{u}_0' dx\\
\le \int_{t_0}^t \left(2E(u_0)+\frac{2-r}{2+r}a(2\Omega s)|u(s,0)|^{r+2}+C_1\|u_0\|_{L^2(\mathbb{R}_+)}^2\right)ds.\end{multline}

Using the assumptions $r\ge 2$ and $a(2\Omega t)>0$ in $[t_0,t_0+T)$, we get the lemma.
\end{proof}
\subsection{Proof of collapse for the half-line problem}
The rest of the proof of the blow-up result can be carried out exactly as in Section \ref{Theorem1Details} by analysing the behaviour of a quadratic polynomilal. The details are omitted.\qed

\section{A Numerical Example}\label{NumSec}  The conditions in Theorem \ref{mainresult-wl} might seem very intricate and one might wonder whether there is any initial datum $u_0$ and $A_\Omega$ (with $\Omega$ big - the interesting case) such that the given assumptions are satisfied.  One can show that there are indeed suitable initial data for the blow-up to occur.  Let us prove this fact for example by finding an initial datum $u_0$ for problem \eqref{Model1a}-\eqref{Model1b} satisfying the assumptions in Theorem \ref{mainresult-wl} (i) which has also infinite momentum, i.e., $xu_0\notin L^2(\mathbb{R})$.

Here, we are interested in proving collapse of solutions even under the effect of fast oscillations.  Therefore, we will consider a relatively large oscillation constant, take for instance $\Omega=100$ and define $A_\Omega(t)=c_0\cos^2(\Omega t)=c_0\cos^2(100t)$, where $c_0=73.55418773631645$.  The derivative of this function is $A_\Omega'(t)=-\Omega c_0\sin(2\Omega t)=-100c_0\sin(200 t).$  Hence, an appropriate pair of $t_0$ and $T$ is given by $\displaystyle t_0=\frac{3\pi}{400}$ and $\displaystyle T=\frac{\pi}{400}$ since $A_\Omega(t_0)> 0$ and $A_{\Omega}'(t)\ge 0$ for $t\in [t_0,t_0+T]$.  Note, that $T\approx 0.00785398$ is relatively much smaller than the case of slow oscillations, say when $\Omega=1$.  The graph of $A_\Omega$ is given in Figure \ref{aomega}.

\begin{figure}[h]
  \centering
   \includegraphics[scale=.75]{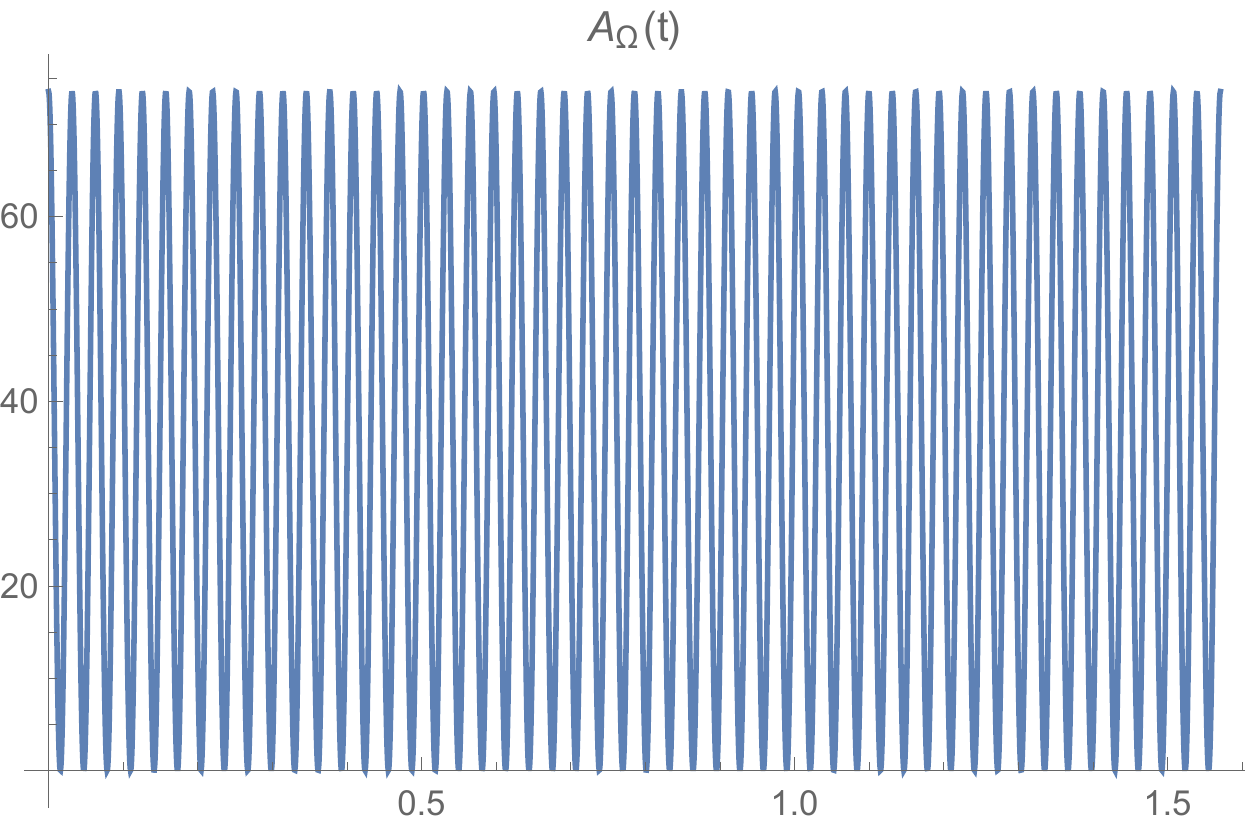}
  \caption{The oscillating coefficient $A_\Omega$ over an extended interval}\label{aomega}
\end{figure}

The first condition on $u_0$ is that it must be an element of $H^1(\mathbb{R})$.  That requires that $u_0$ and $\partial_x u_0$ must have sufficient decay as $|x|\rightarrow \infty.$  However, since we also want $u_0$ to have infinite momentum, the decay assumption on $u_0$ should not be too strong so that $xu_0\notin L^2(\mathbb{R})$ is satisfied.  This motivates us to consider a function $u_0$ which involves one or more terms similar to $\displaystyle\frac{1}{(x^2+1)^{\frac{1}{2}}}$.

We first construct a concrete example of a compactly supported weight function $\varphi$.  Note that the function $\varphi$ given in \eqref{varphidef} is assumed to stay smooth for $ 1.57735\approx 1+\frac{1}{\sqrt{3}}< |x|<2$ but an explicit definition on that portion was not given.  Since, our aim here is to give a numerical example, we first define this missing smooth portion of the function $\varphi$.  In order to do this, we use two mollifiers $l$ and $r$, where $l$ refers to the mollifier that we will apply to the the left hand side of the graph of $\varphi$ whereas $r$ refers to the mollifier that we will apply to the right hand side of the graph of $\varphi.$  To this end, we define
\begin{equation}\label{l-mol}
  l(x)=
     \begin{cases}
      Exp(1)Exp\left({\frac{-1}{(1 - (0.4(x + 1.6))^4)}}\right) &\quad\text{if } |x+1.6|<.4,\\
    0 &\quad\text{otherwise}
     \end{cases}
\end{equation} and

\begin{equation}\label{r-mol}
  r(x)=
     \begin{cases}
      Exp(1)Exp\left({\frac{-1}{(1 - (0.4(x - 1.6))^4)}}\right) &\quad\text{if } |x-1.6|<.4,\\
    0 &\quad\text{otherwise}
     \end{cases}
\end{equation}  The reason that we are using the fourth power in the term $(x \mp 1.6)^4$ is because when we apply these mollifiers to smoothly join two pieces of $\varphi$, we want $\varphi$ to have continuous derivatives at least up to order three. The graph of these mollifiers are given in Figure \ref{figlr}.

\begin{figure}[h]
  \centering
   \includegraphics[scale=.75]{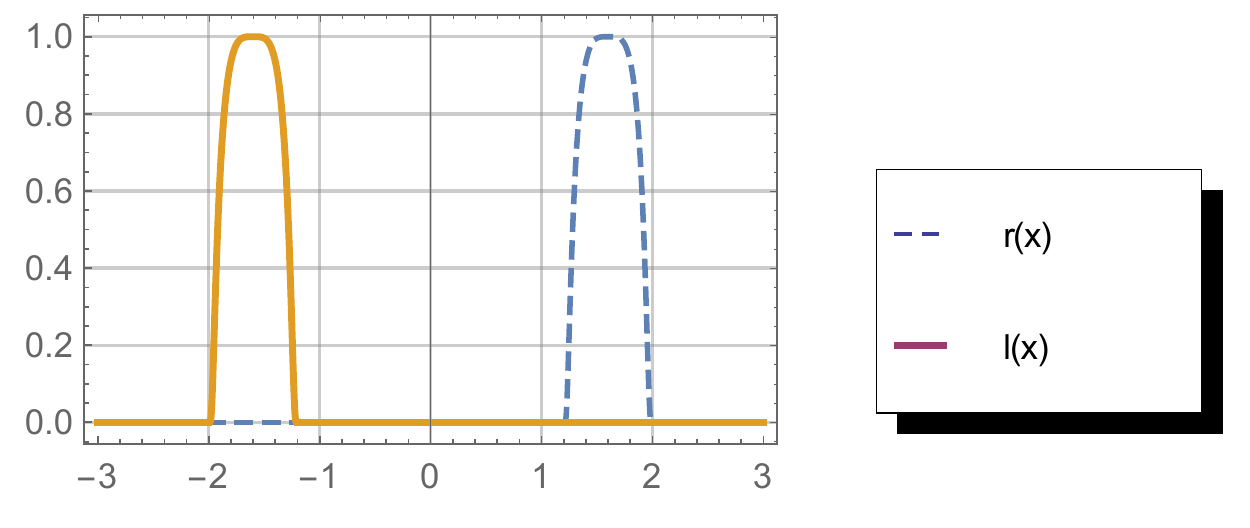}
  \caption{Mollifiers $l$ and $r$}\label{figlr}
\end{figure}

Now, we define (noticing that $1+\frac{1}{\sqrt{3}}\approx1.57735<1.6$):

\begin{equation}\label{varphidefAgain}
 \varphi(x) =
  \begin{cases}
      \hfill x    \hfill & \text{ if $|x|\le 1$}, \\
      \hfill x-(x-1)^3 \hfill & \text{ if $1<x \le  1+1/\sqrt{3}$}, \\
      \hfill -x+2(1+1/\sqrt{3})-(-x+1+2/\sqrt{3})^3 \hfill & \text{ if $1+1/\sqrt{3}<x \le 1.6 $}\\
      \hfill (x - (x - 1)^3)r(x)   \hfill & \text{ if $1.6< x< 2$}, \\
      \hfill x-(x+1)^3 \hfill & \text{ if $-(1+1/\sqrt{3})\le x<-1 $}, \\
      \hfill -x-2(1+1/\sqrt{3}) - (-x-1-2/\sqrt{3})^3   \hfill & \text{ if $-1.6\le  x< -(1+1/\sqrt{3})$}, \\
      \hfill (x - (x + 1)^3)l(x)   \hfill & \text{ if $-2<  x< -1.6$}, \\
      \hfill 0    \hfill & \text{ if $2\le |x|$}, \\
  \end{cases}
\end{equation}  This function is $C^3$ and satisfies the desired condition $\varphi'(x)\le 0$ for $|x|\ge 1+\frac{1}{\sqrt{3}}.$ Note that we have used vertical reflections at $x=\mp (1+1/\sqrt{3})$ to enforce the latter property. The graph of this compactly supported weight function, which satisfies all the conditions in \eqref{varphidef} is depicted in Figure \ref{phi}.

\begin{figure}[h]
  \centering
   \includegraphics[scale=.75]{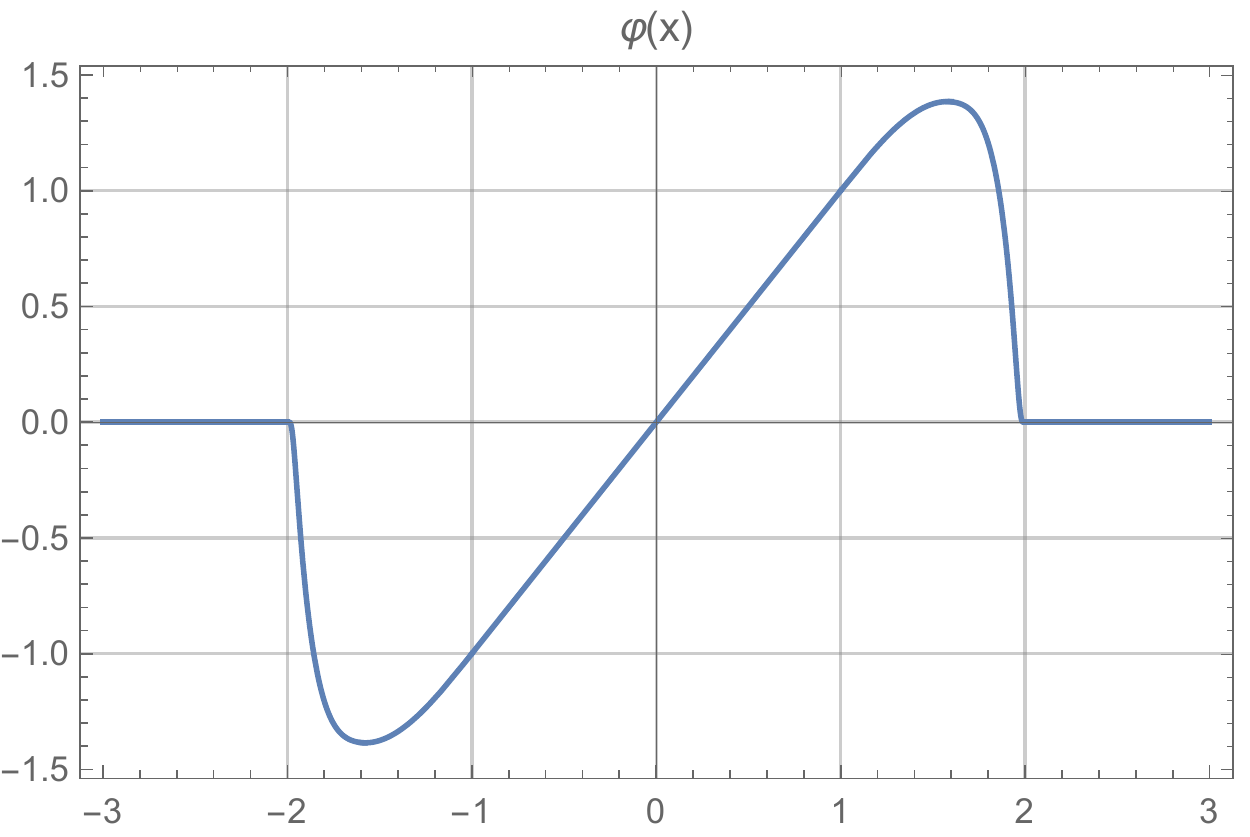}
  \caption{Weight function $\varphi$}\label{phi}
\end{figure}

One of the parameters in Theorem \ref{mainresult-wl} is $\delta$.  This constant needs to be specified before we proceed further.  In fact after a careful analysis of the proofs of Lemmas \ref{virial} and \ref{interiorest}, it follows that: $$\displaystyle \delta=\frac{1}{2}\sqrt{\frac{3}{8A_{\Omega}(t_0+T)}}\left(\|\varphi'''\|_\infty+{\max\left\{\sqrt{3}, \frac{1}{2}\|\varphi''\|_\infty\right\}^2}\right).$$

One immediate observation from the requirement $$\Im\int_{G} \varphi u_0\bar{u}_0' dx >0$$ is that the imaginary part of $u_0$ should not be zero and the real and imaginary parts should not be linearly dependent.  Therefore, $u_0$ should be in the form $u_0(x)=f(x)+ig(x)$ where $f$ and $g$ are linearly independent real valued functions on $\mathbb{R}$.

We will define $u_0$ to be zero at the left half line for simplicity so that we don't have to deal with the cancellation effects in some integrals which come from the oddness of $\varphi$. To this end, we first set a function $m=m(x)$ on $\mathbb{R}$ which will be tuned to be $u_0$ in a moment: \begin{equation}\label{defm}
                      m(x)=  \begin{cases}
                      0 & \quad\text{if } x<0,\\
                      \frac{x}{\sqrt{2}} & \quad\text{if } 0\le x<1,\\
                      \frac{1}{\sqrt{1+x^2}} & \quad\text{if } x\ge 1.
     \end{cases}
                    \end{equation} A graph of this function and its derivative are given in Figures \ref{mgraph} and \ref{dmgraph}
\begin{figure}[h]
  \centering
   \includegraphics[scale=.75]{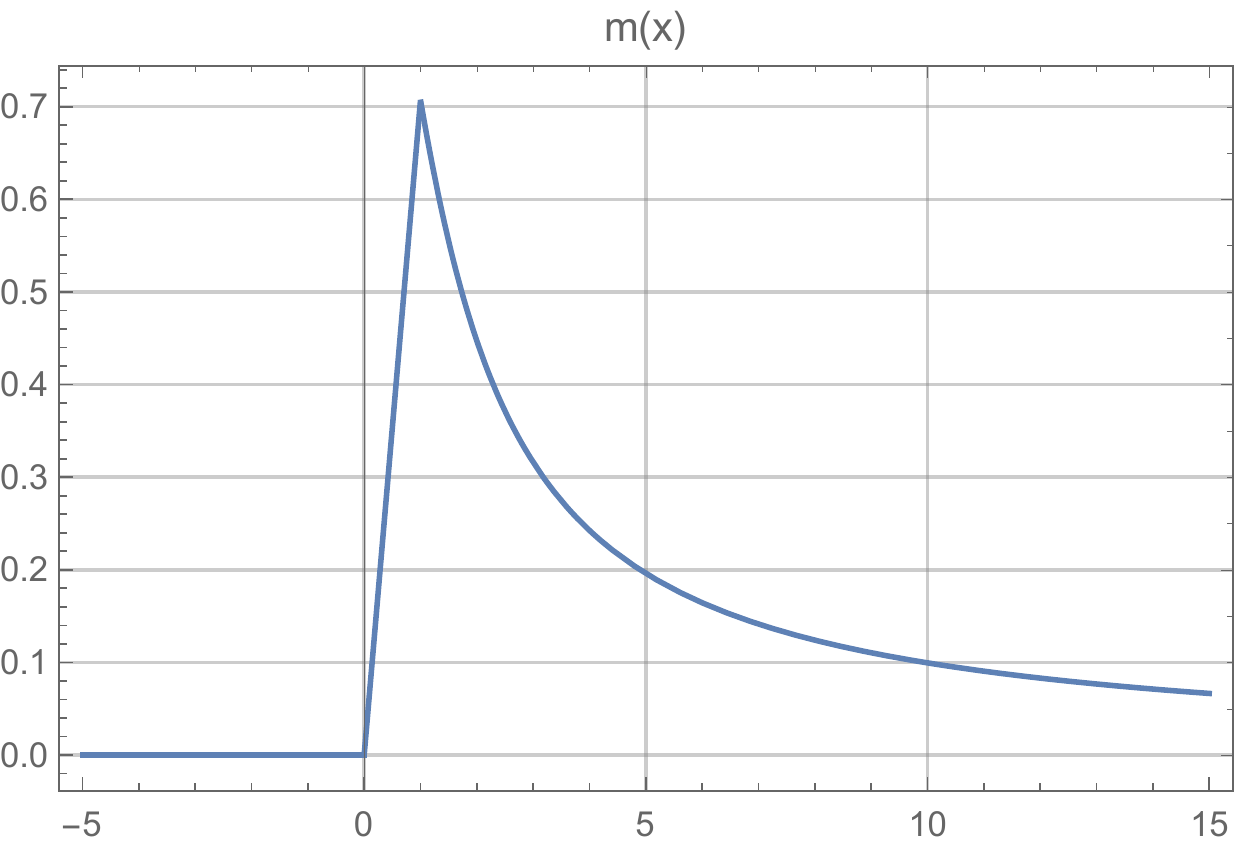}
  \caption{The graph of $m(x)$}\label{mgraph}
\end{figure}

\begin{figure}[h]
  \centering
   \includegraphics[scale=.75]{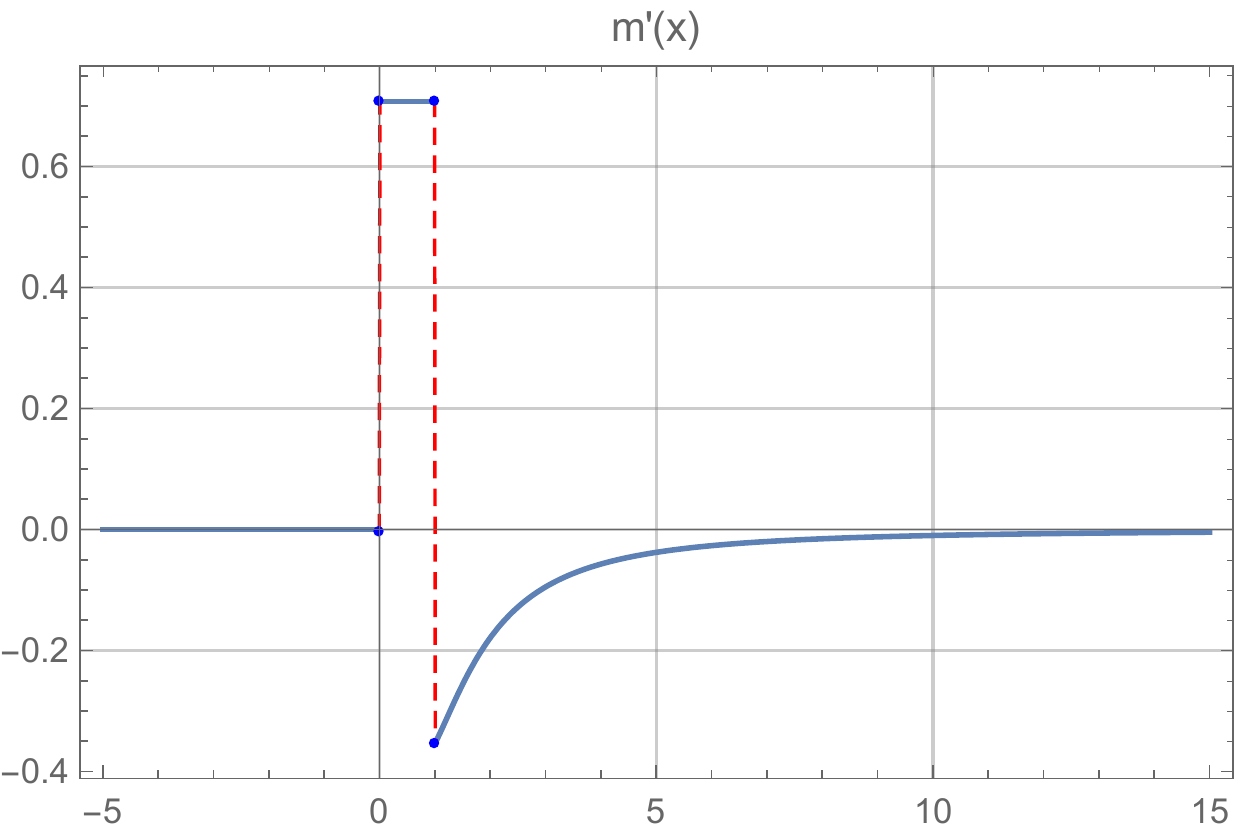}
  \caption{The graph of $m'(x)$}\label{dmgraph}
\end{figure}

It is easy to show that $m\in H^1(\mathbb{R})$ which is enough regularity for our purposes.  Now, we define $u_0$ as follows:
\begin{equation}\label{defu0}
                      u_0(x)=
                      \frac{1}{\sqrt{\rho}}\left[m\left(\frac{x}{\rho}\right) + im\left(\frac{2x}{\rho}\right)\right],
                    \end{equation} where $\rho>0$ is a fixed parameter to be specified in a moment.  One should carefully observe that as $\rho$ gets smaller $u_0$ gets concentrated around the origin more and $\|u_0\|_{L^2(|x|\ge 1)}$ gets smaller, see this in Figure \ref{u0graph}.  Intuitively, blow-up must occur if there is a nonlinear effect  strong enough to compete with dispersive effect.  Since, we are restricted to the case of positive energy solutions, in order to maximize the possiblity of collapse we consider a function for which both $\frac{A_\Omega(t_0)}{3}\|u_0\|_{L_6(\mathbb{R})}^6$ and $\|\partial_xu_0\|_{L^2(\mathbb{R})}^2$ are big. This is exactly what is achieved by the scaling introduced in \eqref{defu0}.  Therefore, we are motivated to take $\rho$ very small.
\begin{figure}[h]
  \centering
   \includegraphics[scale=.75]{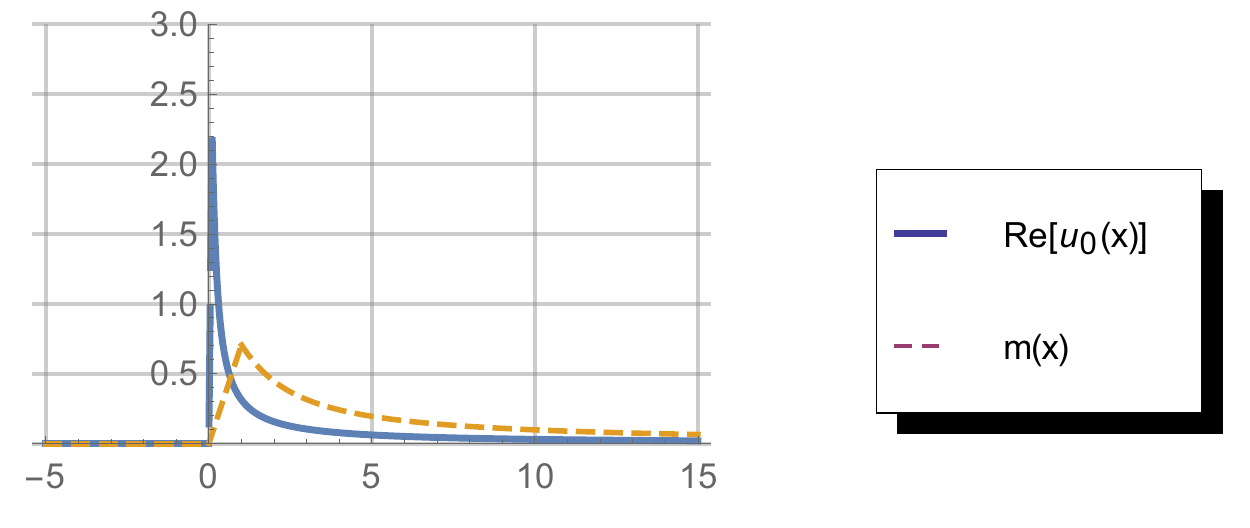}
  \caption{The graph of initial datum $\text{Re}[u_0(x)]$ ($\rho=0.1$) and $m(x)$}\label{u0graph}
\end{figure}

Now, we make the story more precise (with calculations performed in Wolfram Mathematica\textsuperscript{\textregistered}11).  We choose $\rho=10^{-10}$ (small for the reasons discussed above). Then, we have $\|u_0\|_{L^2(\mathbb{R})}^2=0.410875$, $\|xu_0\|_{L^2(\mathbb{R})}^2=\infty$ (as desired for $u_0$ to have infinite momentum), $\displaystyle \frac{A_\Omega(t_0)}{3}\|u\|_{L^6(\mathbb{R})}^6=1.64044\times 10^{20}$ (large as desired), $E(u_0)=32768>-\delta/2=-1.74158\times 10^6$ (note that energy is positive), $\Im\int_{G} \varphi u_0\bar{u}_0' dx =0.244626$ (strictly positive as desired and therefore $\lambda<0$),

$$\left(\Im\int_{G} \varphi u_0\bar{u}_0' dx\right)^2-(2E(u_0)+\delta)\left(\int_{G}\Psi|u_0|^2dx\right)=0.0590412>0$$ (strictly positive as desired and therefore $\lambda^2>4\alpha\beta$), where $$\left(\int_{G}\Psi|u_0|^2dx\right)=2.25667\times 10^{-10}.$$ Finally,
\begin{multline}\frac{\Im\int_{G} \varphi u_0\bar{u}_0' dx-\sqrt{\left(\Im\int_{G} \varphi u_0\bar{u}_0' dx\right)^2-(2E(u_0)+\delta)\int_{G}\Psi|u_0|^2dx}}{2E(u_0)+\delta}\\
=4.62803\times 10^{-10}<T=0.00785398,\end{multline} where $\delta=3.48315\times 10^6,$ and therefore $\theta_{-}<T$.  Finally, $$\displaystyle \alpha+\beta\theta_{-}^2=2.26427\times 10^{-10}<\frac{1}{2}\left(\frac{3}{8A_\Omega(t_0+T)}\right)^{\frac{1}{2}}=0.0357011.$$

Hence, $u_0$ satisfies all the conditions in Theorem \ref{mainresult-wl} (i) and therefore the corresponding solution blows-up after a short while before $t$ reaches $t_0+T$.

\section{Conclusion}

In this study, we proved that for some suitably chosen initial data the corresponding (negative or positive energy, finite or infinite momentum) solution of the Schrödinger equation, which is posed either on the real axis or on the half-line, with a focusing type nonlinear interior or boundary source term whose coefficient is time dependent (oscillating) will blow-up at the energy level.  Blow-up of solutions in the case of oscillations is very interesting since it is well-known that oscillations help to create a stabilizing effect on the solutions.  In the case of infinite momentum solutions, the blow-up phenomena was well-known when the coefficient of the nonlinear source term were a constant and energy were negative (see \cite{AASKD} and \cite{OT}).  Regarding oscillating nonlinearities, blow-up was shown only for the finite momentum solutions of the Schrödinger equation posed on the entire space.  It turns out that the finite momentum assumption is not necessary in the one dimensional setting, although it maybe in higher dimensions (see \cite{DO} and \cite{ZZ}).

\section*{Acknowledgements}
I would like to thank the anonymous referee for his careful reading of the manuscript and his several insightful comments and suggestions, which significantly contributed to improving the quality of this article.

\end{document}